\newcommand{\nc}{\newcommand}
\theoremstyle{plain} %% This is the default
\newtheorem{thm}{Theorem}%[section]
\newtheorem*{thm*}{Theorem}
\nc{\bthm}{\begin{thm}} \nc{\ethm}{\end{thm}}
\newtheorem{prop}[thm]{Proposition}
\nc{\bprp}{\begin{prop}} \nc{\eprp}{\end{prop}}
\newtheorem{fact}[thm]{Fact}
\nc{\bfct}{\begin{fact}} \nc{\efct}{\end{fact}}
\newtheorem{prob}[thm]{Problem}
\nc{\bprb}{\begin{prob}} \nc{\eprb}{\end{prob}}
\newtheorem{lem}[thm]{Lemma}
\nc{\blem}{\begin{lem}} \nc{\elem}{\end{lem}}
\newtheorem{claim}[thm]{Claim}
\nc{\bclm}{\begin{claim}} \nc{\eclm}{\end{claim}}
\newtheorem{cor}[thm]{Corollary}
\nc{\bcor}{\begin{cor}} \nc{\ecor}{\end{cor}}
\newtheorem{conj}[thm]{Conjecture}
\nc{\bcnj}{\begin{conj}} \nc{\ecnj}{\end{conj}}
\theoremstyle{definition}
\newtheorem{defn}[thm]{Definition}
\nc{\bdfn}{\begin{defn}} \nc{\edfn}{\end{defn}}
\newtheorem{observation}[thm]{Observation}
\nc{\bobs}{\begin{observation}} \nc{\eobs}{\end{observation}}
\theoremstyle{remark}
\newtheorem{rem}[thm]{Remark}
\nc{\brem}{\begin{rem}} \nc{\erem}{\end{rem}}
\newtheorem{cnv}[thm]{Convention}
\nc{\bcnv}{\begin{cnv}} \nc{\ecnv}{\end{cnv}}
\newtheorem{exam}[thm]{Example}
\nc{\bexm}{\begin{exam}} \nc{\eexm}{\end{exam}}
\newtheorem{question}[thm]{Question}
\nc{\bpf}{\begin{proof}} \nc{\epf}{\end{proof}}
\nc{\be}{\begin{enumerate}}
	\nc{\ee}{\end{enumerate}}
\nc{\bi}{\begin{itemize}}
	\nc{\itm}{\item}
	\nc{\ei}{\end{itemize}}
\nc{\invlim}{\lim_{\leftarrow}}
\nc{\dirlim}{\lim_{\rightarrow}}
\nc{\mm}{\mathbf{m}}
\nc{\nn}{\mathbf{n}}
\nc{\kk}{\mathbf{k}}
\nc{\FF}{\mathcal{F}}
\nc{\CC}{\mathcal{C}}
\nc{\Span}{\operatorname{span}}
\nc{\Img}{\operatorname{Im}}
\nc{\rank}{\operatorname{rank}}
\nc{\proj}{\operatorname{proj}}
\nc{\F}{\mathbb{F}}
\nc{\Z}{\mathbb{Z}}
\nc{\Q}{\mathbb{Q}}
\nc{\Br}{\operatorname{Br}}
\nc{\res}{\operatorname{res}}
\nc{\sep}{\operatorname{sep}}
\nc{\Gal}{\operatorname{Gal}}
\title{Free product of Demushkin groups as absolute Galois group}
\author{Tamar Bar-On}
\date{}
\begin{document}
	\maketitle
	\begin{abstract}
	We prove that a free profinite (pro-$p$) product over a set converging to 1 of countably many  Demushkin groups of rank $\aleph_0$, $G_i$, that can be realized as absolute Galois groups, is isomorphic to an absolute Galois group if and only if $\log_pq(G_i)\to \infty$. 
	\end{abstract}
	
	\section*{Introduction}
	
	The famous Elementary Type Conjecture by Ido Efrat (\cite{efrat1995orderings}) suggests that the class of finitely generated pro-$p$ groups which can be realized as maximal pro-$p$ Galois groups of fields containing a primitive $p$-th root of unity is the minimal class of groups that can be constructed from finitely generated free pro-$p$ groups and certain Demushkin groups- those which  occur as maximal pro-$p$ groups over local fields containing a primitive $p$-th root of unity- by forming free pro-$p$ products and certain semidirect products with $\Z_p$. In particular, this class is closed under free pro-$p$ products of finitely many groups.
	
	However, for nonfinitely generated maximal pro-$p$ Galois groups, the situation is much more mysterious. As the building blocks in the finitely generated case are finitely generated free pro-$p$ groups and certain Demushkin groups, in order to start learning maximal pro-$p$ Galois groups of infinite rank it is natural to start with the generalization of these groups to higher rank. While free pro-$p$ groups are naturally defined for every rank, and in fact for every cardinal $\mm$ the free pro-$p$ group of rank $\mm$ can be realized as an absolute Galois group (a full proof can be found, for example, in \cite[Proposition 49]{bar2024demushkin}), the theory of higher rank Demushkin groups is much more complicated. In 1966 Labute presented and classified Demushkin groups of rank $\aleph_0$. In particular he proved that the $p$-Sylow subgroups of finite extension of $\Q_p$ which contain a primitive $p$-th root of unity are pro-$p$ Demushkin groups of rank $\aleph_0$ (\cite[Theorem 5]{labute1966demuvskin}). This work was completed by Min{\'a}{\v{c}} and Ware in their papers \cite{minavc1991demuvskin,minavc1992pro} where they proved that a pro-$p$  Demushkin group $G$ with $q(G)\ne 2$ (to be defined later)  can be realized as a maximal pro-$p$ Galois group if and only if it can be realized as an absolute Galois group, if and only if $s(G)=0$. Here $s(G)$ is an invariant classifies the size of the dualizing module, as presented in \cite{labute1966demuvskin}. A similar result was proved for $q(G)=2$ considering extra invariants.   
	
	Demushkin groups of uncountable rank were only presented in 2024, in the paper \cite{bar2024demushkin}. In that paper it was proved that for every cardinal $\mu$ there exists a pro-$p$ Demushkin group of rank $\mu$ which can be realized as an absolute Galois group. It is still an open question, however, whether every pro-$p$ Demushkin group with the "right" set of invariants can be realized as an absolute Galois group- or even just as a maximal pro-$p$ Galois group. 
	
	As we wish to learn maximal pro-$p$ Galois groups, as well as absolute Galois groups, of infinite rank, the first and most simple operator we shall consider is the \textit{free profinite  product}, and its $p$-version, the free pro-$p$ product, over a set $I$converging to 1 of an arbitrary, possibly infinite, cardinality. In particular we are interested in free profinite (pro-$p$) products of Demushkin groups.
	
	Let $\mathcal{C}$ be a variety of finite groups. By a free pro-$\mathcal{C}$ product over a set converging to 1 we refer to the following definition:
	
	\begin{defn}\cite[Definition 4.1.1]{neukirch2013cohomology}\label{free pro-C product}
		Let $\{G_i\}_{i\in I}$ be a set of profinite groups. A set of continuous homomorphisms $\{\varphi_i:G_i\to G\}_{i\in I}$ where $G$ is a profinite group, is called \textit{converging to 1} if for every open subgroup $U\leq G$, $\varphi_i(G_i)\subseteq U$ for almost all $i\in I$. We say that $G$ is a free profinite product of $\{G_i\}_{i\in I}$ if there is a converging to 1 set of homomoprhisms  $\{\varphi_i:G_i\to G\}$ such that $\overline{\langle \bigcup _{i\in I}\varphi_i(G_i)\rangle} =G$ and for every converging to 1 set of homomorphisms  $\{\psi_i:G_i\to H\}_{i\in I}$ there is a homomorphism $\psi:G\to H$ such that for every $i\in I$, $\psi_i=\psi\circ \varphi_i$. 
	\end{defn}  
\begin{rem}
	One easily deduce that the induced homomorphism $\psi:G\to H$ is unique.
\end{rem}  
	
	free pro-$\mathcal{C}$ product over a set converging to 1 is in fact a special case of the more general notation of a \textit{free pro-$\mathcal{C}$ product over a sheaf}, which is studied extensively in \cite[Chapter 5]{ribes2017profinite}, as appeared below.  
	
	Notice that in case $I$ is finite, the condition of being converging to 1 always holds. Moreover, one easily verifies that for every set $I$ of pro-$\mathcal{C}$ groups, the free pro-$\mathcal{C}$ product exists and is unique up to isomorphism. In addition, the following holds:
	\begin{prop}\cite[Proposition 5.1.6]{ribes2017profinite}
		Let $\{G_i\}_{i\in I}$ be a set of pro-$\mathbb{C}$ groups, and $\coprod_I^{\mathcal{C}}G_i$ their free pro-$\mathcal{C}$ product. Let $\{\varphi_i:G_i\to G\}_{i\in I}$ be the set of natural homomorphisms. Then for all $i\in I$, $\varphi_i:G_i\to G$ is a monomorphism.
		
	\end{prop}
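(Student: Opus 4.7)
\medskip

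\noindent\textbf{Proof proposal.} The plan is to construct, for each fixed $j\in I$, a continuous retraction $\psi\colon G\to G_j$ such that $\psi\circ \varphi_j = \operatorname{id}_{G_j}$; the existence of such a $\psi$ immediately forces $\varphi_j$ to be a monomorphism.

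To produce $\psi$, I would invoke the universal property of the free pro-$\mathcal{C}$ product with the target group $H:=G_j$. Define a family of continuous homomorphisms $\{\psi_i\colon G_i\to G_j\}_{i\in I}$ by
\[
\psi_j := \operatorname{id}_{G_j}, \qquad \psi_i := 1 \text{ (the trivial map) for every } i\neq j.
\]
Since $\psi_i(G_i)=\{1\}\subseteq U$ for every open subgroup $U\leq G_j$ and every $i\neq j$, the family $\{\psi_i\}_{i\in I}$ vacuously satisfies the convergence-to-$1$ condition (only one index, namely $j$, can possibly fail the containment, so the set of exceptional indices is finite). Because $G_j$ itself is a pro-$\mathcal{C}$ group, the universal property of $G=\coprod_I^{\mathcal{C}} G_i$ from Definition~\ref{free pro-C product} applies and produces a unique continuous homomorphism $\psi\colon G\to G_j$ with $\psi\circ\varphi_i=\psi_i$ for all $i$. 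Taking $i=j$ gives $\psi\circ\varphi_j=\operatorname{id}_{G_j}$, which shows that $\varphi_j$ has a left inverse and is therefore injective. Doing this for each $j\in I$ completes the proof.

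I do not anticipate a genuine obstacle here: the entire argument is a one-line application of the universal property, and the only point to verify carefully is that the chosen family $\{\psi_i\}$ converges to $1$, which is essentially tautological because all but one of the maps are trivial. The only substantive hypothesis being used is that each $G_i$ lies in the pro-$\mathcal{C}$ category, so that the target $G_j$ is itself an admissible test object for the universal property; if one weakened the hypothesis on the $G_i$, this proof would break exactly at that step.
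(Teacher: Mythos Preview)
Your argument is correct: constructing a retraction $\psi\colon G\to G_j$ via the universal property, using the family with $\psi_j=\operatorname{id}_{G_j}$ and all other $\psi_i$ trivial, is exactly the standard proof of this fact, and your verification of the convergence-to-$1$ condition is clean.

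Note, however, that the paper does not actually supply its own proof of this proposition; it is quoted as \cite[Proposition~5.1.6]{ribes2017profinite} and used as a black box. The argument you wrote is essentially the one found in Ribes' book, so there is nothing to compare against within the paper itself.
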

	
	In \cite{jarden1996infinite} Moshe Jarden suggested the following question: Let $n$ be a finite number. Is the free product of $n$ absolute Galois groups an absolute Galois group as well?
	
	This question was answered in the affirmative in several papers such as \cite{ershov1997free, haran2000free, ershov2001free} and \cite{Koenigsmann2005Products}. In particular, if all the groups can be realized as absolute Galois groups over fields of common characteristic $l$ ($l\geq 0$) then so can their free profinite product. In case the groups $G_1,...,G_n$ are pro-$p$ groups it can be shown that their free \textbf{pro-$p$} product is an absolute Galois group as well (over the same common characteristic). (\cite[Remark 3.5]{haran2000free}).
	
	It is worth mentioning the following result of Koenigsmann \cite{Koenigsmann2005Products}:
	\begin{thm}
		Let $\mathcal{C}$ be a class of finite groups closed under forming subgroups, quotients, and extensions, and assume that the class of pro-$\mathcal{C}$ absolute Galois groups is closed under free pro-$\mathcal{C}$ products of a finite number of groups. Then $\mathcal{C}$ is either the class of all finite groups or the class of all finite p-groups.
	\end{thm}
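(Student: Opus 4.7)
The plan begins by extracting the structural invariant of $\mathcal{C}$. A class of finite groups closed under subgroups, quotients, and extensions is a variety of finite groups, and the natural invariant is the set $\pi := \{p \text{ prime} : \Z/p \in \mathcal{C}\}$. Subgroup closure together with Cauchy's theorem forces every group in $\mathcal{C}$ to be a $\pi$-group, and closure under extensions applied along a composition series shows that, conversely, every finite $\pi$-group lies in $\mathcal{C}$. Hence $\mathcal{C}$ coincides with the class of all finite $\pi$-groups for some non-empty set of primes $\pi$, and the theorem reduces to showing $|\pi| = 1$ or $\pi$ is the set of all primes. These are exactly the two situations in which closure of absolute Galois groups under finite free pro-$\mathcal{C}$ products is already known (the pro-$p$ and profinite cases cited above); the task is to rule out every mixed intermediate case.

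Suppose for contradiction that $|\pi| \geq 2$ and some prime $\ell$ is missing from $\pi$; fix distinct $p, q \in \pi$. The key arithmetic rigidity is that any field $K$ with $G_K$ pro-$\mathcal{C}$ admits no Galois extension of degree divisible by $\ell$, which (by Kummer theory, or Artin--Schreier if $\mathrm{char}\,K = \ell$) forces strong cyclotomic and $\ell$-divisibility conditions on $K$ and couples the primes in $\pi$ through the common ambient field. The strategy is to exhibit two pro-$\mathcal{C}$ absolute Galois groups $G_1 = G_{K_1}$ and $G_2 = G_{K_2}$ together with an invariant that every pro-$\mathcal{C}$ absolute Galois group must respect but that decouples under the free pro-$\mathcal{C}$ product. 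Natural candidates include non-trivial cup products $H^1(G_i, \mu_p) \otimes H^1(G_i, \mu_q) \to H^2(G_i, \mu_p \otimes \mu_q)$ forced by Bloch--Kato, or compatibilities between the $p$- and $q$-cyclotomic characters $\chi_p, \chi_q$ that must both factor through the same field. In a free pro-$\mathcal{C}$ product, the standard Mayer--Vietoris type splitting $H^n(G_1 *_\mathcal{C} G_2, A) \cong H^n(G_1, A) \oplus H^n(G_2, A)$ for $n \geq 1$ forces such cup products to decouple, yielding the sought contradiction.

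The main obstacle is intrinsic: while the cohomological decoupling in the free product is essentially automatic from its structure, the coupling inside a single absolute Galois group is a field-theoretic phenomenon that has to be translated into a purely group-theoretic invariant. This is where a Koenigsmann-style local-global principle---detecting $p$-henselian valuations and orderings from small Galois quotients---or a precise use of the Norm Residue Isomorphism Theorem enters, in order to extract the obstruction without direct reference to the underlying field.
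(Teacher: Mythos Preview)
The paper does not prove this theorem; it merely quotes it as a result of Koenigsmann \cite{Koenigsmann2005Products} and gives no argument whatsoever. So there is no ``paper's own proof'' to compare your proposal against.

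As for your proposal on its own merits: the first paragraph is a correct and clean reduction---classes closed under subgroups, quotients, and extensions are exactly the classes of all finite $\pi$-groups for some set of primes $\pi$, and the problem becomes showing $|\pi|=1$ or $\pi$ is all primes. From that point on, however, what you have written is a strategy sketch rather than a proof. You correctly identify that the free pro-$\mathcal{C}$ product decouples cohomology via a Mayer--Vietoris splitting, and that the obstruction must therefore come from some field-theoretic rigidity that persists in every genuine absolute Galois group; but you do not actually produce the invariant or carry out the contradiction. Your final paragraph is an explicit acknowledgement of this gap: you say the missing ingredient is ``a Koenigsmann-style local-global principle'' or ``a precise use of the Norm Residue Isomorphism Theorem'', without supplying either.

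Koenigsmann's actual argument in \cite{Koenigsmann2005Products} does go through valuation theory: the core is that if $\pi$ contains two primes $p,q$ but misses some prime, one can build two pro-$\mathcal{C}$ absolute Galois groups whose free pro-$\mathcal{C}$ product, were it an absolute Galois group $G_K$, would force $K$ to carry a henselian valuation (detected via $p$-rigidity and $q$-rigidity criteria of the type in Koenigsmann's earlier work on $p$-henselian valuations), and this in turn is shown to be incompatible with the free-product structure. Your outline is pointed in the right direction---the decoupling of ``local'' data in a free product versus the coupling forced by a common field is exactly the tension Koenigsmann exploits---but the concrete mechanism (rigidity $\Rightarrow$ henselian valuation $\Rightarrow$ contradiction with free-product decomposition) is absent from your write-up. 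As submitted, this is a plausible plan, not a proof.
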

	 Contrary to the finite case, the free profinite product over a set converging to 1 of infinitely many absolute Galois groups may not be an absolute Galois group of any field, as can be seen later in Lemma \ref{main example}, as well as an example in \cite{mel1999free}. 
	
		In his paper from 1999 \cite{mel1999free} Mel'nikov gave the following criterion for a free product over a separable sheaf to be realized as an absolute Galois group:
	\begin{thm}
		Let $T$ be a separable profinite space and $\mathcal{G}=\bigcup_T G_t$ a sheaf of profinite groups defined over $T$ and let $l\geq 0$. Then $\prod_TG_t$ is an absolute Galois group of a field of characteristic $l$ if and only if there exists a morphism $\varphi:\mathcal{G}\to G_F$ for some field $F$ of characteristic $l$, such that $G_F$ is separable and for every $t\in T$ the restriction of $\varphi$ to $G_t$ is injective.
	\end{thm}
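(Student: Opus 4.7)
The plan is to split the equivalence into the straightforward forward implication and the substantive converse. For the forward direction, suppose $\prod_T G_t \cong G_F$ with $F$ of characteristic $l$. The universal property of the free product over a sheaf furnishes canonical stalk maps $G_t \hookrightarrow \prod_T G_t$, which are injective by the sheaf analogue of Proposition 5.1.6 quoted above; composing with the given isomorphism to $G_F$ yields a morphism $\varphi:\mathcal{G}\to G_F$ that restricts to an injection on every stalk. Separability of $G_F$ would be verified by showing that the free product of a sheaf of separable profinite groups is again separable, which I expect to follow by a direct analysis of the cofinal system of open normal subgroups in $\prod_T G_t$ inherited from the sheaf topology on $T$.

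For the converse I would imitate the finite-rank arguments of \cite{ershov1997free,haran2000free,Koenigsmann2005Products} in the sheaf setting. Given $\varphi:\mathcal{G}\to G_F$ with each stalk map injective, the universal property provides an induced continuous homomorphism $\bar\varphi:\prod_T G_t\to G_F$. The goal is then to construct an algebraic extension $K$ of an appropriate subfield of $F^{\sep}$ whose absolute Galois group is isomorphic to $\prod_T G_t$, by well-ordering the finite embedding problems for $\prod_T G_t$ and solving them one at a time inside the ambient $G_F$. The separability hypothesis on $G_F$ is precisely what supplies enough room to resolve each embedding problem, while the convergence-to-1 condition built into the sheaf $\mathcal{G}$ ensures that the countably-or-more many partial solutions fit together coherently into a single algebraic extension.

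The principal obstacle I anticipate is twofold. First, stalkwise injectivity of $\varphi$ does not a priori imply injectivity of the induced map $\bar\varphi$: relations among products of elements coming from different stalks might collapse in $G_F$ even though no single stalk does, and ruling this out seems to require using both the sheaf-theoretic convergence of $\varphi$ and the separability of $G_F$ to separate elements of the free product by their images in finite pro-$\mathcal{C}$ quotients. Second, even once $\bar\varphi$ is known injective, identifying its image as a genuine absolute Galois group $G_K$, and not merely as a closed subgroup of $G_F$, requires an iterative field-theoretic construction in the style of Koenigsmann adapted to the sheaf case; this, rather than the injectivity issue, I expect to be the technical heart of the proof, with the separability of both $T$ and $G_F$ entering to keep the transfinite iteration under control.
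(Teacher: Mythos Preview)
Your proposal misses the actual mechanism of Mel'nikov's proof and contains a conceptual confusion that blocks the converse direction. First, your ``second obstacle'' is not an obstacle at all: any closed subgroup of an absolute Galois group $G_F$ is itself the absolute Galois group of the corresponding fixed subfield of $F^{\sep}$, so once you have an embedding of $\prod_T G_t$ into some $G_F$ you are done immediately --- no iterative field construction is needed. The entire difficulty therefore lies in your ``first obstacle'', and here your plan does not work: stalkwise injectivity of $\varphi$ genuinely does \emph{not} force the induced $\bar\varphi:\prod_T G_t\to G_F$ to be injective, and no amount of separability of $G_F$ or convergence in the sheaf will repair this, since the stalks may simply overlap or satisfy relations inside $G_F$.

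Mel'nikov's actual argument (sketched in the paper in the remark following the statement and in the proof of Lemma~\ref{common root infinite case}) sidesteps this by not aiming at $G_F$ at all. Instead, one uses $\varphi$ to build an embedding $\prod_T G_t \hookrightarrow G_F \coprod F[T]$, where $F[T]$ is the free profinite group on the space $T$; the free factor supplies exactly the extra room needed to separate the stalks. Since free profinite groups are absolute Galois groups and the class of (separable) absolute Galois groups is closed under finite free profinite products, $G_F\coprod F[T]$ is an absolute Galois group of the same characteristic, and hence so is the embedded $\prod_T G_t$. This is the key idea you are missing; your embedding-problem strategy, by contrast, tries to stay inside $G_F$, where the free product need not live.
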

Here and below $G_F$ ($G_F(p)$) stands for the absolute (maximal pro-$p$) Galois group of the field $F$.
\begin{rem}
Mel'nikov's Criterion can be extended to every sheaf of profinite groups and every field $F$ by the exact same proof. More precisely, Mel'nikov used the given morphism  $\varphi:\mathcal{G}\to G_F$ in order to construct an embedding $ \prod_TG_t\to G_F\coprod F[T]$ where $F[T]$ denotes the free profinite group over the profinite space $T$. The separability condition of $G_F$ and $T$ came in order to conclude that $G_F\coprod F[T]$ is an absolute Galois group, as the closeness under free profinite product was only known in that days for the class of \textit{separable} absolute Galois groups. Now that Jarden's question has been fully solved in the affirmative, the separability condition can be removed.
\end{rem}
In this paper we only need the original separable version of Mel'nikov's Criterion, as we are going to deal with countable set of separable groups.

	Notice that the free profinite product over the set $I$ converging to 1 is in fact the free product over $T=I\cup\{\ast\}$, the one-point compactification of the discrete space $I$, where $G_{\ast}=\{e\}$. Moreover, a morphism of sheaves into a pro-$\mathcal{C}$ group in that case is nothing but a converging to 1 set of homomorphisms. Hence, Mel'nikov Criterion can be phrased for a free product over a set converging to 1 as follows:
	
	Let $\{G_i\}_{i\in I}$ be a set of profinite groups. Then the free profinite product $\coprod_IG_i$ over the set $I$ converging to 1 can be realized as an absolute Galois group over a field of characteristic $l$ if and only if there is a converging to 1 set of homomorphisms $\{G_i\}\to G_F$ where $F$ is a field of characteristic $l$.
	
		From now on, unless stated otherwise, when we talk about a free pro-$\mathcal{C}$ product over a set $I$, we will always refer to the free pro-$\mathcal{C}$ product over the set $I$ converging to 1. 
	
	We also recall the following useful fact:
	\begin{fact}\cite[Proposition 1.3(a)]{mel1999free}
Let $l$ be a prime and $G$ a profinite group which occurs as an absolute Galois group over a field of characteristic $l$. Then $G$ can occur as an absolute Galois group over a field of characteristic 0.
	\end{fact}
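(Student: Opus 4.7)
The plan is to realize $G$ as an absolute Galois group in characteristic $0$ by lifting $F$ via Witt vectors, killing tame inertia by adjoining prime-to-$l$ roots of the uniformizer, and splitting off the remaining wild-inertia extension using the cohomological dimension bound $\mathrm{cd}_l(G_F) \leq 1$.

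First, replace $F$ by its perfect closure (this leaves the absolute Galois group unchanged, since $F^{\mathrm{perf}}/F$ is purely inseparable), and form the Witt ring $A = W(F)$, a complete discrete valuation ring of characteristic $0$ with residue field $F$ and uniformizer $l$. Let $K = \operatorname{Frac}(A)$. Henselian valuation theory gives the inertia exact sequence
\[
1 \longrightarrow I_K \longrightarrow G_K \longrightarrow G_F \longrightarrow 1.
\]
Next, pass to $K' = K(l^{1/n} : (n,l)=1) \subset \overline K$, a characteristic-$0$ Henselian field whose residue field is still $F$ (the extension $K'/K$ is totally ramified) and whose tame inertia is trivial, since every prime-to-$l$ root of a uniformizer already lies in $K'$. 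The analogous sequence for $K'$ therefore collapses to
\[
1 \longrightarrow P \longrightarrow G_{K'} \longrightarrow G_F \longrightarrow 1,
\]
with the kernel $P$ the wild inertia, a pro-$l$ group.

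Now invoke the classical theorem that $\mathrm{cd}_l(G_F) \leq 1$ for any field $F$ of characteristic $l$ (a consequence of Artin--Schreier theory), which forces every continuous extension of $G_F$ by a pro-$l$ group to split. Take a continuous section $s : G_F \to G_{K'}$, and set $F' := (\overline{K'})^{s(G_F)}$. By the Galois correspondence for closed subgroups of $G_{K'}$, one has $G_{F'} \cong s(G_F) \cong G_F \cong G$, while $F'$ is a subfield of $\overline{K'}$ and hence has characteristic $0$, as required.

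The main obstacle is the splitting step itself: one must promote the cohomological vanishing $H^2(G_F, -) = 0$ on $l$-primary modules into an honest continuous section $G_F \to G_{K'}$. This is handled by filtering the pro-$l$ kernel $P$ along its descending central series, splitting the resulting abelian extensions one stratum at a time, and assembling the pieces into a coherent section by an inverse-limit / compactness argument. The ramification-theoretic bookkeeping (checking that the totally ramified adjunction preserves the residue field and that adjoining prime-to-$l$ roots really does trivialize the tame inertia) is routine but deserves care, since $K'$ is no longer discretely valued.
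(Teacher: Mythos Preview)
The paper does not supply its own proof of this statement; it is recorded as a Fact with a citation to \cite{mel1999free}. Your argument is correct. Two small comments. First, what you call the ``main obstacle'' is not one: the equivalence between $\mathrm{cd}_l(G)\le 1$ and the existence of a continuous section for every profinite extension of $G$ with pro-$l$ kernel is a classical characterisation (Serre, \emph{Cohomologie galoisienne}, I~\S3.4), so you may simply invoke it rather than sketch a filtration/compactness argument. Second, the verification that $K'=K(l^{1/n}:(n,l)=1)$ has pro-$l$ inertia is immediate once one notes that $l$ is a uniformizer of $W(F)$ and that the tame character is computed on $n$-th roots of a uniformizer; the fact that $K'$ is Henselian but no longer discretely valued causes no difficulty.

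For comparison, the paper handles the analogous characteristic changes it actually needs (Remark~\ref{common subfield}, Lemma~\ref{common root infinite case}) through the more flexible places/valuations machinery of \cite{haran2000free}: one chooses a place from a suitable characteristic-$0$ function field onto the given field and then invokes a decomposition/Henselization argument (\cite[Corollary~2.3(b)]{haran2000free}) to produce an algebraic extension of the source with the prescribed absolute Galois group. Your Witt-vector construction is essentially the special case of this where the underlying place is the $l$-adic one; it is the cleanest choice when one only needs to lift from characteristic $l$ to $0$, whereas the general places approach buys the extra flexibility of forcing the new field to contain a prescribed subfield, which is exactly what the paper exploits later.
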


	We use Mel'nikov's Criterion in order to give a simple Criterion for a countable series of Demushkin groups of rank $\aleph_0$ to create an absolute Galois free product. More precisely, we prove the following:
	
	\begin{thm*}
	Let $p$ be a fixed prime, $l$ a prime different then $p$, $I$ be an infinite countable set and $\{G_i\}_{i\in I}$ be a set of pro-$p$ Demushkin groups of rank $\aleph_0$ which can be realized as absolute Galois groups. Then the following are equivalent:
\begin{enumerate}
	\item $\coprod_{i\in I}G_i$ can be realized as an absolute Galois group of a field $F$ (of characteristic $l$).
	\item $\coprod_{i\in I}^pG_i$ can be realized as an absolute Galois group of a field $F$ (of characteristic $l$).
	\item $\coprod_{i\in I}G_i$ can be realized as an absolute Galois group of a field $F$ (of characteristic $l$) which contains a primitive $p$'th root of unity.
	\item $\coprod_{i\in I}^pG_i$ can be realized as a maximal pro-$p$ Galois group of a field $F$ (of characteristic $l$) which contains a primitive $p$'th root of unity.
	\item $\log _pq(G_i)\to \infty$ (and $\log_pq(G_i)\geq f(l,p)$ for all $i$ such that $q(G_i)\ne 2$. In addition if $q(G_i)=2$ then $G_i$ can be realized over a field of characteristic $l$).
\end{enumerate}  
	\end{thm*}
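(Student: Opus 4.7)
The plan is to use Mel'nikov's Criterion as the central tool, showing condition (5) is equivalent to each of (3) and (4) directly, and then deducing the remaining implications by forgetful arguments and by taking maximal pro-$p$ quotients. Specifically, the implications (3)$\Rightarrow$(1) and (4)$\Rightarrow$(2) are trivial (dropping the hypothesis $\zeta_p\in F$), while (1)$\Rightarrow$(2) and (3)$\Rightarrow$(4) follow from the fact that, when each $G_i$ is pro-$p$, the free pro-$p$ product $\coprod_{i\in I}^pG_i$ is the maximal pro-$p$ quotient of the free profinite product $\coprod_{i\in I}G_i$. This reduces the theorem to establishing the two parallel equivalences $(5)\Leftrightarrow(4)$ and $(5)\Leftrightarrow(3)$.

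For the easy direction (4)$\Rightarrow$(5) (and analogously (1),(2),(3)$\Rightarrow$(5)), assume a converging-to-1 system of embeddings $\varphi_i:G_i\hookrightarrow G_F(p)$ with $\zeta_p\in F$ of characteristic $l$. For each integer $k\geq 1$, the subgroup $G_{F(\zeta_{p^k})}(p)\leq G_F(p)$ is open (since $F(\zeta_{p^k})/F$ is finite for $l\neq p$), so the converging-to-1 property yields $\varphi_i(G_i)\subseteq G_{F(\zeta_{p^k})}(p)$ for all but finitely many $i$. For such $i$, the fixed field $L_i=F(p)^{\varphi_i(G_i)}$ contains $F(\zeta_{p^k})$, hence $\zeta_{p^k}\in L_i$, and since $G_{L_i}(p)\cong G_i$ is Demushkin, Labute's characterization of $q(G)$ via the largest $p$-power root of unity in the base field forces $q(G_i)\geq p^k$. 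Therefore $\log_pq(G_i)\to\infty$. The baseline $\log_pq(G_i)\geq f(l,p)$ is forced because each $L_i$ contains the prime field adjoined with $\zeta_p$, which already carries $p^{f(l,p)}$-th roots of unity. The realizability clause at $q(G_i)=2$ is automatic: $G_i=G_{L_i}(p)$ with $L_i$ of characteristic $l$.

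The main direction (5)$\Rightarrow$(4) is where the construction lies. By the pro-$p$ version of Mel'nikov's Criterion it suffices to exhibit a field $F$ of characteristic $l$ with $\zeta_p\in F$ together with a converging-to-1 family of embeddings $G_i\hookrightarrow G_F(p)$. I would take $F$ to be an extension (of $\Q_l(\zeta_p)$ in characteristic $0$, or of an analogous field in positive characteristic $l$) chosen large enough that its maximal pro-$p$ Galois group is sufficiently rich, and whose cyclotomic tower $F=F_0\subset F_1\subset F_2\subset\cdots$ with $F_k=F(\zeta_{p^k})$ is infinite. Partition $I=\bigsqcup_{n}I_n$ by $I_n=\{i:\log_pq(G_i)=n\}$; by hypothesis each $I_n$ is finite. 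For each $i\in I_n$, realize $\varphi_i$ as an embedding $G_i\hookrightarrow G_{F_n}(p)\leq G_F(p)$, using the hypothesis that $G_i$ is realizable as an absolute Galois group (and the explicit $q(G_i)=2$ clause when relevant) to ensure the target open subgroup is rich enough to contain such a copy. The converging-to-1 condition then holds because every open $U\leq G_F(p)$ contains $G_{F_n}(p)$ for $n$ large enough. A parallel construction in the profinite setting, using Mel'nikov's original criterion and embeddings into $G_F$, yields (5)$\Rightarrow$(3).

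The main obstacle is the individual embedding step: given a countably generated Demushkin group $G_i$ with $\log_pq(G_i)=n$, show that $G_i$ embeds as a closed subgroup of the open subgroup $G_{F_n}(p)$. This demands a careful choice of the ambient field $F$ so that each $G_{F_n}(p)$ is itself a rich enough pro-$p$ Galois group, and appeals to the classification and realizability theory of countably generated Demushkin groups developed in \cite{labute1966demuvskin,minavc1991demuvskin,minavc1992pro,bar2024demushkin}. The case $q(G_i)=2$ is a technical subtlety, as the Demushkin classification at $p=2$ depends on additional invariants beyond $q(G)$; this is precisely why (5) imposes the explicit realizability-over-characteristic-$l$ hypothesis on such groups.
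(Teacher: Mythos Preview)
Your easy direction (4)$\Rightarrow$(5) is essentially the paper's Lemma~\ref{main example} and is fine. The hard direction, however, has a genuine gap.

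\textbf{The converging-to-1 claim fails.} You assert that ``every open $U\le G_F(p)$ contains $G_{F_n}(p)$ for $n$ large enough.'' This is false: the intersection $\bigcap_n G_{F_n}(p)=G_{F_\infty}(p)$ with $F_\infty=\bigcup_n F_n$ is enormous for any field $F$ rich enough to contain copies of the $G_i$, so the cyclotomic chain $\{G_{F_n}(p)\}$ is nowhere near a neighborhood base of~$1$. Consequently, merely landing each $\varphi_i(G_i)$ inside $G_{F_n}(p)$ does not force the family $\{\varphi_i\}$ to converge to~$1$, and Mel'nikov's criterion does not apply.

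The paper circumvents this by embedding not into $G_F(p)$ for an externally chosen $F$, but into a single Demushkin group $H$ of rank $\aleph_0$ with $s(H)=0$ (treating the cases $q=0$ and $2<q<\infty$ separately). The point is that such an $H$ has only countably many open subgroups; enumerating them as $U_1,U_2,\dots$ and setting $V_n=\bigcap_{m\le n}U_m$, one obtains a genuine neighborhood base of~$1$. Each $V_n$ is again Demushkin of rank $\aleph_0$ with $s(V_n)=0$ and $q(V_n)\to\infty$, so Labute's uniqueness theorem lets one send each $G_i$ isomorphically onto a suitable $V_n$ (or an open subgroup of one). Since $H$ itself is an absolute Galois group by Min\'a\v{c}--Ware, Mel'nikov then applies. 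The finitely many $G_i$ with $q(G_i)=2$ are adjoined at the end via the finite free-product theorem.

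\textbf{Two smaller issues in your implication graph.} First, (4)$\Rightarrow$(2) is not ``dropping $\zeta_p\in F$'': statement~(4) concerns a maximal pro-$p$ Galois group, statement~(2) an absolute Galois group, and these are genuinely different conditions. Second, your (1)$\Rightarrow$(2) via ``maximal pro-$p$ quotient'' only shows the pro-$p$ product is a maximal pro-$p$ Galois group, not an absolute one. The paper instead proves that $\coprod^p_I G_i$ embeds as a closed subgroup of $\coprod_I G_i$ (a nontrivial Sylow-conjugation argument), which immediately gives (1)$\Rightarrow$(2); the cycle is then $(1)\Rightarrow(2)\Rightarrow(3)\Rightarrow(4)\Rightarrow(5)\Rightarrow(1)$.
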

The value $f(l,p)$ will be presented later.

	The following natural follow-up questions remain open:
	\begin{question}\label{further questions}
		\begin{itemize}
			\item Is the free pro-$p$ product of countably many finitely generated Demushkin groups, which can occur as maximal pro-$p$ Galois groups of local fields, and satisfy  $\log_pq(G_i)\to \infty$, a maximal pro-$p$ Galois group as well?
			\item Can we generalize Theorem \ref{free product of Demushkin} to uncountable sets of Demushkin groups of rank $\aleph_0$? I.e, assuming that $I$ is an uncountable set of pro-$p$ Demushkin groups of rank $\aleph_0$ satisfying that for every natural $k$, there are only finitely many Demushkin groups $G_i\in I$ such that $q(G_i)=p^k$, is $\coprod IG_i$ ($\coprod_I^pG_i$) an absolute Galois group?
			\item What can be said about a general free product over a profinite space of a sheaf consisting of Demushkin groups of rank $\aleph_0$? 
		\end{itemize}
	\end{question}

	\section*{Main results}
	We start this section with a minor improvement to the closeness of the class of absolute Galois group under finite free profinite product:
	\begin{rem}\label{common subfield}
		Let $L$ be a field and let $K_1,K_2$ be fields which contain isomorphic copies of $L$, which we refer to as $L_1,L_2$ correspondingly. Then $G_{K_1}\coprod G_{K_2}$ can be realized as an absolute Galois group over a field containing an isomorphic copy of $L$.  
	\end{rem} 
\begin{proof}
First we realize both $G_{K_1},G_{K_2}$ as absolute Galois groups of separable algebraic extensions $F_1,F_2$ of the same field $F$ which contains a copy of $L$. We do so by replacing the role of $F_0$ and $E_0$ in the proof of \cite[Proposition 2.5]{haran2000free} by $L(L_i)$. More precisely, choose transcendence bases $T_1$ and $T_2$ for $K_1/L_1$ and $K_2/L_2$ correspondingly and put $M_1=L_1[T_1],M_2=L_2[T_2]$. Let $\varphi_i:L\to L_1\cup \{\infty\}$, $\varphi_2:L\to L_2\cup\{\infty\}$ be the places defined by the given isomorphisms. Choose a set $T$ of cardinality greater then $\max\{|T_1|,|T_2|\}$ and surjective maps $\varphi'_i:T\to T_i$ for $i=1,2$. Put $F=L[T]$. Then $\varphi_i,\varphi'_i$ can be extended to places $\varphi_i'':F\to M_i\cup\{\infty\}$. Denote the corresponding valuations by $v_i$, \cite[Corollary 2.3(b)]{haran2000free} gives fields $F_i$ which are algebraic over $F$ such that $G_{F_i}\cong G_{K_i}$. Replace $F,F_1,F_2$ by their separable closures we get the desired fields. Now apply \cite[Theorem 3.3]{haran2000free} to realize $G_{K_1}\coprod G_{K_2}$ over some extension of $F$, we are done.
\end{proof}
\begin{cor}\label{common root of unity}
	Let $l\geq 0$ and $p$ be prime, and let $G_1,G_2$ be profinite groups that can be realized as absolute Galois groups over fields $K_1,K_2$ of characteristic $l$ that contain a primitive $p$-th root of unity. Denote by $L'$ the prime field of $K_1,K_2$ and let $L=L'[\rho]$ for $\rho$ a primitive $p$-th root of unity. Then by Remark \ref{common subfield} we conclude that $G_1\coprod G_2$ can be realized over a field of characteristic $l$ that contains a primitive $l$-th root of unity.
\end{cor}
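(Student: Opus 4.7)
The plan is to invoke Remark \ref{common subfield} directly, taking as the common subfield $L:=L'[\rho]$, the $p$-th cyclotomic extension of the common prime field. First I would observe that each $K_i$ contains an isomorphic copy of $L$. Indeed, $L'$ is the prime field of $K_i$ and so sits canonically inside $K_i$; since by hypothesis $K_i$ contains a primitive $p$-th root of unity $\rho_i$, the subfield $L_i:=L'[\rho_i]\subseteq K_i$ is (over $L'$) an instance of the $p$-th cyclotomic extension of $L'$, hence isomorphic to $L$. This identifies the two embeddings $L\hookrightarrow L_i\subseteq K_i$ required as input to Remark \ref{common subfield}.

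Next I would apply Remark \ref{common subfield} to $L$, $K_1$, $K_2$. The remark produces a field $F$ whose absolute Galois group is $G_{K_1}\coprod G_{K_2}=G_1\coprod G_2$ and which contains an isomorphic copy of $L$. It then remains to check the two side conditions: (i) $F$ has characteristic $l$, and (ii) $F$ contains a primitive $p$-th root of unity. Both are immediate from containing a copy of $L$: a field containing the prime field $L'$ of characteristic $l$ itself has characteristic $l$, and a field containing an isomorphic copy of $L=L'[\rho]$ contains a root of the $p$-th cyclotomic polynomial, i.e., a primitive $p$-th root of unity.

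There is really no main obstacle here; the corollary is a direct repackaging of Remark \ref{common subfield}, where the only content is choosing the common subfield $L$ wisely so that containment of $L$ simultaneously controls characteristic and the presence of $\zeta_p$. The one point I would double-check is that the field $F$ produced by the construction in the proof of Remark \ref{common subfield} — namely an algebraic, then separable-algebraic, then further algebraic extension of $L[T]$ — indeed preserves characteristic $l$ and the subfield $L$; this is clear since every step is an extension of a field already containing $L$.
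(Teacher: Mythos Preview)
Your proposal is correct and follows exactly the approach the paper intends: the corollary is stated with its proof already embedded (``by Remark \ref{common subfield} we conclude\ldots''), and you have simply filled in the routine verifications---that $L=L'[\rho]$ embeds in each $K_i$ and that the field produced by Remark \ref{common subfield} inherits both the characteristic and the primitive $p$-th root of unity from $L$. There is nothing to add.
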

Using the above Corollary, we can now suggest a similar improvement to Mel'nikov's Criterion.
\begin{lem}\label{common root infinite case}
	Let $T$ be a profinite space, $\mathcal{G}=\bigcup_T G_t$ a sheaf of profinite groups defined over $T$. In addition,  let $l\geq 0$ and $p$ be a prime. Then $\prod_TG_t$ is an absolute Galois group of a field of characteristic $l$ \textbf{that contains a primitive $p$-th root of unity} if and only if there exists a morphism $\varphi:\mathcal{G}\to G_F$ for some field $F$ of characteristic $l$ \textit{that contains a primitive $p$-th root of unity}, such that for every $t\in T$ the restriction of $\varphi$ to $G_t$ is injective.
\end{lem}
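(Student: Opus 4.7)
The plan is to run the same argument used in the remark extending Mel'nikov's Criterion to arbitrary sheaves and fields, and to check that at every step the primitive $p$-th root of unity can be kept inside the ambient field. The forward direction is immediate: if $\prod_T G_t \cong G_K$ for a field $K$ of characteristic $l$ containing a primitive $p$-th root of unity, just set $F=K$ and take $\varphi:\mathcal{G}\to G_F$ to be the canonical sheaf morphism coming from the inclusions $G_t\hookrightarrow \prod_T G_t$; each restriction is injective by \cite[Proposition 5.1.6]{ribes2017profinite}.

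For the reverse direction, starting from a morphism $\varphi:\mathcal{G}\to G_F$ whose restriction to each $G_t$ is injective, I would first reproduce Mel'nikov's construction verbatim: combining $\varphi$ with the universal map $T\to F[T]$ into the free profinite group over the profinite pointed space $T$, one obtains an embedding $\iota:\prod_T G_t\hookrightarrow G_F\coprod F[T]$. So it suffices to realize $G_F\coprod F[T]$ as the absolute Galois group of a field of characteristic $l$ containing a primitive $p$-th root of unity, since then $\iota$ identifies $\prod_T G_t$ with a closed subgroup of such a $G_K$, which by the Galois correspondence equals $G_{K'}$ for a separable algebraic extension $K'/K$ that automatically inherits both the characteristic and the primitive $p$-th root of unity.

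The main obstacle is realizing $F[T]$ itself as $G_E$ for some field $E$ of characteristic $l$ containing a primitive $p$-th root of unity; once this is granted, Corollary \ref{common root of unity} produces the required common realization of $G_F\coprod F[T]=G_F\coprod G_E$. My approach to this obstacle is to appeal to the fact that $F[T]$ is a projective profinite group, together with the Ax--Roquette / Lubotzky--van den Dries theorem that every projective profinite group is the absolute Galois group of a PAC field. Such a PAC field can be built over any prescribed ground field of the prescribed characteristic, so starting from the prime field of characteristic $l$ with a primitive $p$-th root of unity adjoined one obtains a field $E$ of the required shape whose absolute Galois group is $F[T]$. Plugging this into Corollary \ref{common root of unity} and combining with the preceding paragraph closes the argument.
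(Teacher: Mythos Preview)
Your proposal is correct and follows the same overall architecture as the paper: both directions proceed exactly as you describe, reducing the reverse implication via Mel'nikov's embedding $\prod_T G_t \hookrightarrow G_F \coprod F[T]$ and then invoking Corollary~\ref{common root of unity}, so that everything comes down to realizing $F[T]$ as an absolute Galois group over a field of characteristic $l$ containing $\rho_p$.

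The only difference is in how this last point is handled. The paper uses that $F[T]$ is actually a \emph{free} profinite group over some set converging to $1$ (\cite[Proposition 3.5.12]{ribes2000profinite}) and invokes the explicit realization \cite[Example 3.3.8(e)]{ribes2000profinite} of such a group as $G_{K'(t)}$ for $K'$ an algebraically closed field of the appropriate cardinality; taking $K'$ to be the algebraic closure of a purely transcendental extension of $\mathbb{F}_l(\rho_p)$ (or $\mathbb{Q}(\rho_p)$) immediately gives the desired characteristic and root of unity. Your route through projectivity and the Lubotzky--van~den~Dries theorem also works, but is slightly less direct: you need the refined form asserting that the PAC realization may be carried out over a prescribed base field (equivalently, that a suitable Hilbertian field of the given characteristic containing $\rho_p$ has a free profinite group of large enough rank inside its absolute Galois group). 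The paper's argument exploits freeness rather than mere projectivity and thereby sidesteps this extra input.
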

\begin{proof}
	Mel'nikov's proof uses the morphism $\varphi$ in order to construct an embedding of $\prod_TG_t$ into $G_F\coprod F[T]$ where $F[T]$ is the free profinite group over the profinite space $T$. By Corollary \ref{common root of unity} it is enough to show that $F[T]$ can be realized over a field of characteristic $l$ that contains a primitive $p$-th root of unity. This holds by \cite[Example 3.3.8 (e)]{ribes2000profinite} which realizes every free profinite group over a set $X$ converging to 1 as the absolute Galois group of $K(t)$ for every algebraically closed field $F$ of cardinality $|X|$, and the fact that every free profinite group over a profinite space is in fact a free profinite group over some set $X$ converging to 1, where $|X|=\omega_0(F[T])$. (\cite[Proposition 3.5.12]{ribes2000profinite}). Now let $K=\mathbb{F}_l[\rho_p]$ for $l\ne 0$ and $K=\mathbb{Q}[\rho_p]$ for $l=0$, and let $K'$ be the algebraic closure of $K(x_i)_{i\in I}$ for $|I|=\omega_0(F[T])$, then $G_{K'(t)}$ is isomorphic to $F[T]$.
\end{proof}

	Now we discuss the connection between the realization as an absolute Galois group of the free profinite and free pro-$p$ products of a set of pro-$p$ groups. First we need the following lemma:
\begin{comment}	
	\begin{lem}
		Let $(\mathcal{G},\pi,T)$ be a sheaf of profinite groups over a profinite space $T$, and let $\varphi:\mathcal{G}\to A$ be a morphism of sheaves for a finite group $A$. Assume that for some $t\in T$ $\varphi(G_t)=B$ where $B\leq A$. Then the set $\{t\in T:\varphi(G_t)=B\}$ is open in $T$.
	\end{lem}
\begin{proof}
For every $a\in A$, $\varphi^{-1}(a)$ is closed in $\mathcal{G}$. Hence the set $T_a=\pi(\varphi^{-1}(a)) $ is closed in $T$. We get that the set $S=\{t\in T:\varphi(G_t)\leq B\}=\bigcap_{a\in A\setminus B}T_a^c$ is an open subset of $T$. In particular, $\pi^{-1}(S)$ is open in $\mathcal{G}$.
\end{proof}
\end{comment}
\begin{lem} \label{the pro-p case}
	Let $\{G_i\}_{i\in I}$ be a set of pro-$p$ groups. Then there is a natural embedding $\coprod_I^pG_i\to \coprod_IG_i$.
\end{lem}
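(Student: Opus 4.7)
The plan is to construct the desired embedding as the inclusion of a pro-$p$ Sylow subgroup of $\coprod_{i \in I} G_i$. First, apply the universal property of the free profinite product to the canonical converging-to-1 inclusions $G_i \hookrightarrow \coprod_{i \in I}^p G_i$ (viewing the target as a profinite group). This produces a canonical continuous surjection
$$ \pi : \coprod_{i \in I} G_i \twoheadrightarrow \coprod_{i \in I}^p G_i, $$
uniquely determined by the requirement that its composition with the canonical $G_i \hookrightarrow \coprod_{i \in I} G_i$ agrees with $G_i \hookrightarrow \coprod_{i \in I}^p G_i$.

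The central claim is that $K := \ker(\pi)$ is a pro-prime-to-$p$ closed normal subgroup of $\coprod_{i \in I} G_i$. Granting this, the short exact sequence
$$ 1 \to K \to \coprod_{i \in I} G_i \to \coprod_{i \in I}^p G_i \to 1 $$
has kernel and quotient of coprime supernatural orders, and hence splits by the profinite version of the Schur--Zassenhaus theorem. Any continuous section then realizes $\coprod_{i \in I}^p G_i$ as a pro-$p$ Sylow subgroup of $\coprod_{i \in I} G_i$, giving the desired natural embedding.

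The main obstacle is establishing that $K$ is pro-prime-to-$p$. The strategy is to reduce to the finitely-generated case: by the converging-to-1 property, every finite quotient $Q$ of $\coprod_{i \in I} G_i$ factors through a finite free profinite product of finitely many of the $G_i$'s, since almost all factors map trivially into $Q$. In the finite case, classical results on free profinite products of pro-$p$ groups (of Binz--Neukirch--Wenzel, extended by Herfort--Ribes) identify the maximal pro-$p$ quotient with a pro-$p$ Sylow subgroup, with a pro-prime-to-$p$ kernel.

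The delicate step is assembling these finite-level complements consistently in the inverse limit. One must verify that pro-prime-to-$p$ kernels at each finite level are compatible along the projections, so that their inverse limit is a well-defined closed normal pro-prime-to-$p$ subgroup equal to $K$. The converging-to-1 hypothesis ensures that each finite quotient is controlled by a finite sub-coproduct, which is what makes the finite-level Binz--Neukirch--Wenzel description available; once coherence in the inverse limit is checked, the Schur--Zassenhaus splitting in the previous paragraph produces the embedding.
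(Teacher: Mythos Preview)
Your central claim---that $K=\ker\pi$ is pro-prime-to-$p$---is false, and this breaks the argument. Take $p=3$ and $G_1=G_2=\Z/3$. The free profinite product $\Z/3\coprod\Z/3$ surjects onto $A_5$, since $A_5$ is generated by two $3$-cycles (e.g.\ $(123)$ and $(345)$). Let $N$ be the kernel of this surjection. Because $A_5$ is simple and not a $3$-group, the image $KN/N\trianglelefteq A_5$ cannot be trivial (otherwise $A_5$ would be a quotient of the pro-$3$ group $\coprod^3\Z/3$), so $KN/N=A_5$. Hence $K/(K\cap N)\cong A_5$, and $3\mid|K|$. The same phenomenon occurs for every prime $p$ once enough factors are present: for $p=2$ one needs three factors (two involutions always generate a dihedral group), but $A_5$ is generated by three double transpositions; for $p\geq 3$ two factors already suffice. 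The results of Binz--Neukirch--Wenzel and Herfort--Ribes concern intersections of conjugates of the free factors and the structure of Sylow subgroups of free products; they do not assert that the kernel of the maximal pro-$p$ quotient is prime-to-$p$, and in fact a $p$-Sylow subgroup of $\coprod_I G_i$ is strictly larger than $\coprod_I^p G_i$ in general.

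What the paper does instead is construct a section of $\pi$ directly, without any hypothesis on $K$. One fixes a $p$-Sylow subgroup $P\leq\coprod_I G_i$, so that $\pi|_P$ is automatically surjective. For each $i$ one conjugates $G_i$ into $P$ by some $a_i$, then corrects by an element $b_i\in P$ with $\pi(b_i)=\pi(a_i)$ so that the resulting embedding $G_i\hookrightarrow P$ is compatible with $\pi$. After checking that these conjugated inclusions still converge to $1$, the universal property of $\coprod_I^p G_i$ produces a map $\alpha'\colon\coprod_I^p G_i\to P$ with $\pi\circ\alpha'=\mathrm{id}$, hence $\alpha'$ is injective. This avoids Schur--Zassenhaus entirely and makes no claim about the order of $K$.
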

\begin{proof}
	We use a similar proof to that of \cite[Remark 3.5]{haran2000free} in order to construct an embedding $\coprod_{I}^pG_i\to  G=\coprod_{I}G_i$. Denote by $\alpha: G=\coprod_{I}G_i\to \coprod_{I}^pG_i$ the homomorphism defined by the converging to 1 set of natural homomorphisms $\alpha_i:G_i\to \coprod_{I}^pG_i$. Since $ \coprod_{I}^pG_i$ is generated by $\bigcup_i\alpha_i(G_i)$, $\alpha$ is in fact an epimorphism. Choose a $p$-Sylow subgroup $P$ of  $\coprod_{I}G_i$. Then $\alpha(P)$ is a $p$-Sylow subgroup of $\coprod_{I}^pG_i$ and hence $\alpha(P)=\coprod_{I}^pG_i$. We identify each $G_i$ with its image in $\coprod_{I}G_i$. Thus, each $G_i$ is a subgroup of $\coprod_{I}G_i$. Since $G_i$ is a pro-$p$ group, there exists some $a_i\in \coprod_{I}G_i$ such that $G_i^{a_i}\leq P$. Choose $b_i\in P$ which satisfies $\alpha(b_i)=\alpha(a_i)$. Then $G_i^{a_ib_i^{-1}}\leq P$. The homomorphism $g\to g^{a_ib_i^{-1}}$ defines an embedding of $G_i$ into $P$. In fact, we get a converging to 1 set homomorphism $\{G_i\to P\}$. Indeed, let $U\leq_o P$. We may assume that $U$ is normal. Thus there exists $V\unlhd _o G$ such that $V\cap P\subseteq U$. There exists a finite subset $J\subseteq I$ such that for every $i\in I\setminus J$, $G_i\subseteq V$. Since $V$ is normal  we conclude that for every $i\in I\setminus J$ $G_i^{a_ib_i^{-1}}\leq V$. Hence  for every $i\in I\setminus J$ $G_i^{a_ib_i^{-1}}\leq V\cap P\leq U$. Denote the homomorphism $\coprod_I^pG_i\to P$ induced by this set by $\alpha'$. Such a homomorphism exists by definition of the free pro-$p$ product, since $P$ is a pro-$p$ group. Eventually, $\alpha(\alpha'(g))=g^{\alpha(a_i)\alpha(b_i)^{-1}}=g$ for each $i$ and each $g\in G_i$. Hence, $\alpha\circ \alpha':\coprod_I^pG_i\to \coprod_I^pG_i$ is an isomorphism. Thus, $\alpha': \coprod_{I}^pG_i\to  G=\coprod_{I}G_i$ is an embedding, as required. 
\end{proof}
\begin{comment}
From the above Lemma and Melnikov's criteria we deduce the following:
\begin{prop}
	Let $\{G_i\}_{i\in I}$ be a set of pro-$p$ groups and let $l\geq 0$. Then $\coprod_{I}G_i$ can be realized as an absolute Galois group over a field of characteristic $l$ (that contains a primitive $p$-th root of unity) if and only if $\coprod_{I}^pG_i$ can be realized as an absolute Galois group over a field of characteristic $l$ (that contains a primitive $p$-th root of unity).
\end{prop}
\begin{proof}
	The first direction follows from Lemma \ref{the pro-p case} while the second direction follows from Mel'nikov criteria, when considering the set of natural homomorphisms $\varphi_i:G_i\to \coprod_I^pG_i$. 
\end{proof}
\end{comment}
Now we move to talk about Demushkin groups. We start with some general information.

Recall that a pro-$p$ Demushkin group is a pro-$p$ group $G$ which satisfies 
\begin{enumerate}
	\item $\dim H^2(G)=1$.
	\item The cup product bilinear form $H^1(G)\cup H^1(G)\to H^2(G)\cong \F_p$ is nondegenerate.
\end{enumerate}
where $H^i(G):=H^i(G,\F_p)$.

The theory of finitely generated Demushkin group was studied extensively in \cite{serre1962structure,demushkin1961group,demushkin19632}, and \cite{labute1967classification}. This theory was first extended to Demushkin group of rank $\aleph_0$ in 1966 by Labute (\cite{labute1966demuvskin}). Demushkin groups of countably rank come equipped with 3 invariants that in most cases determine the group up to isomorphism:

$\mathbf{q(G)}$:

Since $\dim(H^2(G))$ equals the minimal number of relations required to define $G$ (see, for example, \cite[Section 1.2]{labute1966demuvskin}), a Demushkin group is always 1-related. Hence $G$ has the form $F/r$ where $F$ is a free pro-$p$ group of the same rank, and $r\in \Phi(F)$. Here $\Phi(F)=F^p[F,F]$ is the Frattini subgroup of $F$. We define $q(G)=p^n$ for the maximal $n\in \mathbb{N}\cup \{\infty\}$ such that $r\in F^{p^n}[F,F]$. We consider $p^{\infty}$ to be 0- hence we say that $q(G)=0$ if and only if $r\in [F,F]$. This notation makes sense since for every element $x$ in a pro-$p$ group, $x^{p^{\infty}}=e$. In particular, we define $\log_p0$ to be $\infty$.  Observe that this definition is well-defined, independent of the choice of a basis of $F$. In fact, $q(G)$ can also be characterized as follows: since $G$ is 1-related, $G/[G,G]\cong \Z/p^n\times \Z_p^{\rank G-1}$, for   some $n\in \mathbb{N}\cup \{\infty\}$. This $p^n$ equals to $q(G)$. 

In the papers \cite{minavc1991demuvskin,minavc1992pro} it has been shown that if $K$ is a field of characteristic different than $p$ which contains a primitive $p$-th root of unity, such that $G_K(p)$ is a pro-$p$ Demushkin group of rank $\aleph_0$, then $\log_pq(G)$ equals the maximal natural number $n$ such that $K$ contains a primitive $p^n$-th root of unity. If $\log_pq(G)=\infty$ then $K$ contains all $p^n$-th roots of unity for every natural number $n$. This result follows from the connection between the invariants $q(G)$ and $\Img(\chi)$, as explained below.

\begin{comment}
In \cite[The discussion after Proposition 1.1]{minavc1991demuvskin} it has been stated that if $K$ is a field of characteristic different than $p$ which contains a primitive $p$-th root of unity, such that $G_K(p)$ is a pro-$p$ Demushkin group of rank $\aleph_0$, then $\log_pq(G)$ equals the maximal natural number $n$ such that $K$ contains a primitive $p^n$-th root of unity. If $\log_pq(G)=\infty$ then $K$ contains all $p^n$-th roots of unity for every natural number $n$. Observe that the paper \cite{minavc1991demuvskin} formally focuses only on the case $p\ne 2$, however this fact holds for every prime $p$. In fact, the case of $p=2$ has been stated in \cite[Theorem 3.2]{minavc1992pro} for fields of characteristic 0 and it follows from the proof of \cite[Theorem 6.1]{minavc1992pro} for fields of positive characteristic- when taking into account the connection between $q(G)$ and $\operatorname{Im}(\chi)$ that will be presented below.
\end{comment}
{$\mathbf{s(G):}$}

By \cite{labute1966demuvskin}, every countably generated Demushkin group has finite cohomological dimension 2, and hence admits a dualizing module $I$. Moreover, it was shown that $I\cong \Q_p/\Z_p\lor \Z/q$ where $q$ is some natural power of $p$. We denote $s(G)=0$ in the first case and $s(G)=q$ in the later case. In \cite[Theorem 2.2]{minavc1992pro} it has been proven that if $K$ is a field of characteristic different than $p$ such that $G_K(p)$ is a pro-$p$ Demushkin group of rank $\aleph_0$, then $s(G)=0$.

{$\boldsymbol{\Img(\chi)}$:}

The dualizing module $I$ comes equipped with a homomoprhism $\chi:G\to \operatorname{Syl}(\operatorname{Aut}(I))\cong \operatorname{Syl}((\Z_p/s(G))^{\times})$, which is called \textit{the character}. 
\begin{rem}\label{interpetation for img chi}
	In \cite{labute1966demuvskin} it has been proven that similarly to the finitely generated case, for $q(G)\ne 2$, $\Img(\chi)=1+q(G)\Z_p/s(G)$. For $q(G)=2$ $\Img(\chi)$ comes from some list of subgroups of $1+2\Z_p$, none of them is contained in $1+4\Z_2$.
\end{rem}
\begin{comment}
	Indeed, by Theorem 4, the interpretation for $\operatorname{Im}(\chi)$, $\Img(\chi)=$\overline{\langle 1-q(G)\rangle }$. Obviously, $\overline{\langle 1-q(G)\rangle }\leq 1+q(G)\Z_p$. If $q'>q=q(G)$ then $\operatorname{Im}(\chi)$ cannot be contained in $1+q' \Z_p$. Indeed, let $q=p^h,q'=p^l$ for $h<l$. Assume that $1+p^h\in 1+p^l\Z_p$. Then $1+p^h=1+p^lx$\Rightarrow$ p^h(1-p^{l-h}x)=0$. Hence $p^{l-h}$ is invertible in $\Z_p$- which is clearly not true. By the list of subgroups of $U_p^{(1)}$ and $U_2^{(2)}$ that was given in the paper, $\Img(\chi)$ is a group of the form $1+q'\Z_p$ and thus $\Img(\chi)=1+q(G)\Z_p$.  
\end{comment}
	Using the above interpretation of $q(G)$, we can explain the Galois interpretation of $q(G)$. By \cite{minavc1991demuvskin,minavc1992pro}, if $K$ is a field containing a primitive root of unity of order $p$ such that $G_K(p)$ is a Demushkin group $G$ of rank $\aleph_0$, then the dualizing module $I$ is isomorphic to $\bigcup_n\mu_{p^n}$, the set of all $p$-powers roots of unity, while the character $\chi:G_K(p)\to \operatorname{Aut}(\bigcup_n\mu_{p^n})\cong (\Z_p)^{\times}$ is the natural homomorphism, which hence can be written explicitly as the map $\chi:G\to (\Z_p)^{\times}$ which assigns to each $\chi(\sigma)$  the unique $p$-adic number such that $\sigma(\rho)=\rho^\chi(\sigma)$ for every $\rho\in \bigcup_n\mu_{p^n}$. Now let $n$ be natural and $\rho_{p^n}$ a $p^n$-th root of unity. Then $\rho_{p^n}\in K$ if and only if it is preserved by the action of $G_K(p)$, if and only if $\rho_{p^n}^{\chi(\sigma)}=\rho_{p^n}$ for every $\sigma\in G$ if and only if $\operatorname{Im}(\chi)\leq 1+p^n\Z_p$.

Eventually, Min{\'a}{\v{c}}\& Ware proved the following characterization of Demushkin groups of rank $\aleph_0$ which can be realized as absolute Galois groups:
\begin{prop}\cite[Theorems 1.2+Theorem 3.1+Theorem 2.3'']{minavc1991demuvskin} \label{characterisation of Demushkin groups as absolute Galois groups}
	Let $p$ be a prime, $G$ be a pro-$p$ Demushkin group of rank $\aleph_0$ with $q(G)\ne 2$ and $l$ a prime different than $p$. Then $G$ can be realized as an absolute Galois group (over a field of characteristic $l$) if and only if $G$ can be realized as a maximal pro-$p$ Galois group (over a field of characteristic $l$ that contains a primitive $p$-th root unity) if and only if $s(G)= 0$ (and $\log_pq(G)\geq f(l,p)$). Here $f(l,p)$ is some natural number associated to $p$ and $l$ (see \cite[Theorem 3.1]{minavc1991demuvskin}, \cite[Theorem 6.1]{minavc1992pro}).    
\end{prop}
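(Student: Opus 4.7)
The plan is to prove the three-way equivalence by first extracting the invariant conditions from either Galois realization (necessity), then bridging the two realization hypotheses directly (so that ``absolute Galois over characteristic $l$'' and ``maximal pro-$p$ Galois over a field containing $\zeta_p$'' coincide), and finally constructing a field realizing $G$ from the invariants (sufficiency).

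For necessity, suppose $G=G_F(p)$ for a field $F$ of characteristic $l\neq p$ containing a primitive $p$-th root of unity. Since $G$ is Demushkin of cohomological dimension $2$, Poincar\'e duality identifies the dualizing module $I$ with $\mu_{p^{\infty}}\subseteq F^{\sep}$. As $l\ne p$, this module is divisible and isomorphic to $\Q_p/\Z_p$, forcing $s(G)=0$. Under this identification the character $\chi$ becomes the cyclotomic character, so $\Img(\chi)=\Gal(F(\mu_{p^{\infty}})/F)\subseteq \Z_p^{\times}$. Combined with Remark \ref{interpetation for img chi}, which (for $q(G)\ne 2$) identifies $\Img(\chi)$ with $1+q(G)\Z_p$, this shows that $p^{\log_p q(G)}$ is the largest order of a $p$-power root of unity contained in $F$. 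Since $F$ contains the prime field $\F_l$ (resp.\ $\Q$ when $l=0$), and that subfield already contains $\zeta_{p^{f(l,p)}}$ for the explicit $f(l,p)$ defined as the maximal such $n$, I conclude $\log_p q(G)\geq f(l,p)$.

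For the bridge between (A) and (B): if $G=G_F$ and $G$ is pro-$p$, every finite separable extension of $F$ has $p$-power degree, so $[F(\zeta_p):F]$ divides $p-1$ and is a $p$-power, hence equals $1$; thus $\zeta_p\in F$ automatically and $G_F=G_F(p)$, giving (B). Conversely, if $G=G_F(p)$, let $F^{(p)}$ be the fixed field of a chosen $p$-Sylow subgroup of $G_F$; then $G_{F^{(p)}}=G_F(p)=G$, yielding (A) with characteristic preserved.

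For sufficiency, given $s(G)=0$ and $\log_p q(G)\geq f(l,p)$, one must construct a field of characteristic $l$ whose maximal pro-$p$ Galois group is isomorphic to $G$. Starting from the prime field enlarged by $\zeta_{p^{\log_p q(G)}}$, the strategy of Min\'a\v c--Ware is to build an algebraic tower in which each successive layer is a carefully chosen cyclic $p$-extension designed to add a single generator of $G$ while preserving the Demushkin relation and keeping $\dim H^2=1$; the condition $\log_p q(G)\geq f(l,p)$ guarantees that at each stage the required cyclotomic data is available over the base. The main obstacle, and the technical heart of the proof, is verifying inductively that the limit group really is Demushkin of rank $\aleph_0$ with exactly the prescribed $s(G)$ and $q(G)$: this amounts to a cohomological control of $H^1$ and $H^2$ along the tower, together with a nondegeneracy check for the cup product in the limit, refining Labute's construction in \cite{labute1966demuvskin} and the realizability arguments of \cite{minavc1991demuvskin,minavc1992pro}.
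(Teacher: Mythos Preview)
The paper does not give its own proof of this proposition: it is quoted as a result of Min\'a\v{c}--Ware (with explicit references to \cite{minavc1991demuvskin,minavc1992pro}) and used as a black box. So there is no ``paper's proof'' to compare against; I can only assess your sketch on its own merits.

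Your necessity argument and your description of the sufficiency construction are reasonable outlines of what Min\'a\v{c}--Ware do. The genuine gap is in your bridge from (B) to (A). You write: ``if $G=G_F(p)$, let $F^{(p)}$ be the fixed field of a chosen $p$-Sylow subgroup of $G_F$; then $G_{F^{(p)}}=G_F(p)=G$.'' But $G_{F^{(p)}}$ is a $p$-Sylow \emph{subgroup} of $G_F$, whereas $G_F(p)$ is the maximal pro-$p$ \emph{quotient} of $G_F$; these are not isomorphic in general. (Already for the finite group $S_4$ the $2$-Sylow is $D_4$ while the maximal $2$-quotient is $\Z/2$.) The surjection $G_F\to G_F(p)$ does restrict to a surjection on any $p$-Sylow, but there is no reason for it to be injective, and nothing in the hypothesis ``$G_F(p)$ is Demushkin'' controls the kernel.

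In Min\'a\v{c}--Ware the implication (B)$\Rightarrow$(A) is not obtained by such a direct bridge; rather, both (A) and (B) are shown separately to be equivalent to the invariant condition $s(G)=0$ (plus the bound on $q(G)$ in positive characteristic). Concretely, from (B) one extracts $s(G)=0$ via the identification of the dualizing module with $\mu_{p^\infty}$ (as you do), and then the explicit field construction -- the part you call the ``technical heart'' -- produces a field whose \emph{absolute} Galois group is $G$, giving (A). So your overall architecture is fine once you drop the faulty shortcut and route (B)$\Rightarrow$(A) through the invariants and the construction.
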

\begin{rem}
	\begin{enumerate}
		\item If the absolute Galois group of a field $K$ is a pro-$p$ group then $K$ must contain a primitive $p$-th root of unity $\mu_p$, for otherwise the normal extension $K[\mu_p]/K$ is not a $p$-extension.
		\item The case of characteristic $p$ can be ignored since the maximal pro-$p$ Galois group of a field of characteristic $p$ is always a free pro-$p$ group (\cite[Corollary 1,II-5]{serre1994cohomologie}).
		\item The case of $p=2$ has been dealt with in \cite[Theorem 3.2'']{minavc1991demuvskin}, where a full classification of pro-2 Demushkin groups of rank $\aleph_0$ which can be realized as absolute or maximal pro-2 Galois groups (over a given characteristic $l$) has been given, in terms of the invariants $t(G),  \Img(\chi)$. Since for $q(G)\ne 2$ we must have $t(G)=1$ and $\Img(\chi)=U^(f)_2$ (a classification of pro-$p$ Demushkin groups of rank $\aleph_0$ in terms of their invariants is given in \cite{labute1966demuvskin}) then we get the above criteria.
	\end{enumerate}
\end{rem}
Before we can prove the restriction on a set of pro-$p$ Demushkin groups whose free profinite (pro-$p$) product can be realized as a maximal pro-$p$ Galois group, we need one more remark. We call a set $\{H_i\}_{i\in I}$ of subgroups of $G$ a \textit{converging to 1 set of subgroups} if for every $U\leq_o G$, $H_i\leq U$ for almost all $i\in I$.
	\begin{rem}\label{converging to 1 extensions}

	Let $K$ be a field and $\{K_i/K\}_{i\in I}$ a set of separable  pro-$\mathcal{C}$ field extensions. I.e, separable  field extensions whose Galois group $\operatorname{Gal}(K_i/K)$ is a pro-$\mathcal{C}$ group. We say that $\{K_i/K\}_{i\in I}$ is a converging to 1 set of field extensions in $(K)^{\mathcal{C}}$ if every $x\in {K}^{\mathcal{C}}$ is contained in all but finitely many $K_i$'s, where ${K}^{\mathcal{C}}$ denotes the maximal separable  pro-$\mathcal{C}$ extension of $K$. In case $\mathcal{C}$ is the variety of all finite groups, ${K}^{\mathcal{C}}$ is nothing but the separable closure of $K$, and we denote it by $K^{\operatorname{sep}}$. In addition, the maximal pro-$p$ extension of $K$ is usually denoted by $K(p)$. 
	
	One can easily verify that $\{K_i/K\}_{i\in I}$ is a converging to 1 set of field extensions in  ${K}^{\mathcal{C}}$ if and only if $\{\operatorname{Gal}({K_i}^{\mathcal{C}}/K_i)\}_{i\in I}$ is a converging to 1 set of subgroups of $\operatorname{Gal}( {K}^{\mathcal{C}}/K)$. Indeed, Let $U\leq_o \operatorname{Gal}( {K}^{\mathcal{C}}/K)$ and assume that $\{K_i/K\}_{i\in I}$ is a converging to 1 set of field extensions in  ${K}^{\mathcal{C}}$. Let $({K}^{\mathcal{C}})^U$ be the fixed field of $U$. Since $U$ is open, $[({K}^{\mathcal{C}})^U:K]<\infty$. Choose a basis $x_1,...,x_n$ of $ ({K}^{\mathcal{C}})^U$ over $K$. For every $1\leq t\leq n$ there is a finite subset $J_t$ of $I$ such that $x_j\in K_i$ for all $i\in I\setminus J_t$. Set $J=\bigcup_{t=1}^nJ_t$, then $({K}^{\mathcal{C}})^U\subseteq K_i$ for all $i\in I\setminus J$. Taking the stabilizer of each subfield in the action of $\operatorname{Gal}( {K}^{\mathcal{C}}/K)$ over ${K}^{\mathcal{C}}$, the inclusion reversed, so we are done. The second direction is proved in a similar way.
\end{rem}

\begin{lem}\label{main example}
 Let $p$ be a prime and let $\{G_i\}_{i\in I}$ be a set of pro-$p$ Demushkin groups of rank $\aleph_0$. If $\coprod_I^pG_i$ is a maximal pro-$p$ Galois group of a field of characteristic different then $p$ which contains a primitive $p$-th root of unity, then for every natural number $n$ there are only finitely many Demushkin groups $G_i$ for which $\log_pq(G_i)\leq n$.
\end{lem}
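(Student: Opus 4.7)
The plan is to translate the group-theoretic assumption into a field-theoretic one and invoke the cyclotomic interpretation of $q$ recalled in the excerpt. Assume $\coprod_I^p G_i \cong G_K(p)$ for a field $K$ of characteristic $l\neq p$ containing $\mu_p$. The natural embeddings $\varphi_i\colon G_i \hookrightarrow G_K(p)$ form a converging-to-$1$ family of closed subgroups, and by Galois correspondence each $\varphi_i(G_i) = \operatorname{Gal}(K(p)/K_i)$ for the intermediate field $K_i := K(p)^{\varphi_i(G_i)}$. By Remark \ref{converging to 1 extensions}, the converging-to-$1$ condition translates to: every finite subextension of $K(p)/K$ lies in $K_i$ for almost all $i$.

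Fix $n \in \mathbb{N}$. Because $\mu_p \subseteq K$, the Galois group of the cyclotomic extension $K(\mu_{p^{n+1}})/K$ injects into $\ker\bigl((\mathbb{Z}/p^{n+1})^\times \to (\mathbb{Z}/p)^\times\bigr)$, which has order $p^n$. Hence $K(\mu_{p^{n+1}})/K$ is a finite $p$-extension, so $K(\mu_{p^{n+1}}) \subseteq K(p)$. By the converging-to-$1$ property, $\mu_{p^{n+1}} \subseteq K_i$ for all but finitely many $i \in I$.

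To translate this into a statement on $q(G_i)$, I apply the Galois interpretation recalled from \cite{minavc1991demuvskin,minavc1992pro}: if $F$ contains $\mu_p$ and $G_F(p)$ is a Demushkin group of rank $\aleph_0$, then $\log_p q(G_F(p))$ equals the largest $m$ with $\mu_{p^m} \subseteq F$. To invoke this with $F = K_i$, I need the identification $\operatorname{Gal}(K(p)/K_i) = G_{K_i}(p)$, i.e., $K_i(p) = K(p)$. This is the main technical step. It will follow from the Kummer-theoretic fact that $K(p)$ is pro-$p$-closed when $\mu_p \subseteq K$: given $a \in K(p)^\times$, a finite $p$-extension $M/K$ contains $a$, and since $\mu_p \subseteq M$, the Galois closure over $K$ of $M(\sqrt[p]{a})$ is obtained by adjoining the finitely many $\sqrt[p]{\sigma(a)}$ for $\sigma \in \operatorname{Gal}(M/K)$, each generating an extension of degree $1$ or $p$; the closure is therefore a finite $p$-extension of $K$ and lies in $K(p)$. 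Hence $K(p)^\times = (K(p)^\times)^p$, so by Kummer theory $K(p)$ admits no nontrivial pro-$p$ Galois extensions, giving $K_i(p) = K(p)$ and $G_{K_i}(p) \cong G_i$. Combining with the previous paragraph, $\log_p q(G_i) \geq n+1 > n$ for all but finitely many $i$, as required.
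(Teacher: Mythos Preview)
Your proof is correct and follows essentially the same approach as the paper's: identify the $G_i$ with closed subgroups of $G_K(p)$, pass to the fixed fields $K_i$, use the converging-to-1 condition to get $\mu_{p^{n+1}}\subseteq K_i$ for almost all $i$, and then invoke the cyclotomic interpretation of $q$. The one place where you go further than the paper is in justifying the identification $G_{K_i}(p)\cong G_i$ (equivalently $K_i(p)=K(p)$) via the Kummer argument that $K(p)$ admits no nontrivial $p$-extensions when $\mu_p\subseteq K$; the paper simply asserts this as known (``Recall that $G_i=G_{F(p)^{G_i}}(p)$'').
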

\begin{proof}
  Assume that there exists a field $F$ containing e primitive $p$-th  root of unity such that $G_F\cong \coprod_I^p G_i$. Identify each $G_i$ with its natural image in $\coprod_I^p G_i$, then $G_i\ne G_j$ for all $i\ne j$ (see \cite[Proposition 5.1.6]{ribes2017profinite}). In addition, by definition of a free pro-$\mathcal{C}$ product, $\{G_i\}_{i\in I}$ is a converging to 1 set of subgroups of $G_F$. By Remark \ref{converging to 1 extensions}, $\{\bar{F}^{G_i}\}_{i\in I}$ is a converging to 1 set of field extensions of $F$ inside $F(p)$. Denote $F_i=F(p)^{G_i}$. Now let $n$ be some natural number. Since $\rho_p\in F$, $\rho_{p^n}\in F(p)$. Here $\rho_{p^n}$ denotes a primitive $p^n$-th root of unity. Then there is a finite subset of $J$ such that for all $i\in I\setminus J$, $\rho_{p^n}\in F_i$. Recall that $G_i=G_{F(p)^{G_i}}(p)$. By the Galois interpretation of $q(G)$ that was described above we get that for every $i\in I\setminus J$, $\log_pq(G_i)\geq n$. 
\end{proof}

In order to prove the main theorem we need a few more lemmas.
\begin{lem}\cite[Remark after Theorem 3.4.1]{neukirch2013cohomology}\label{open subgroup}
	For every profinite group of finite cohomological dimension $G$, and an open subgroup $U\leq G$ the dualizing module of $U$ equals the dualizing module of $G$ with the induced action.
\end{lem}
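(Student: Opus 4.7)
The plan is to verify the defining universal property of the dualizing module for $U$ by transporting the one for $G$ along Shapiro's lemma and Frobenius reciprocity. Recall that for a profinite group $H$ with $\operatorname{cd}(H)=n<\infty$, the dualizing module $I_H$ is characterized (up to isomorphism) by the existence of a natural isomorphism of functors on finite discrete $H$-modules
\[
\operatorname{Hom}_H(-, I_H) \;\xrightarrow{\sim}\; H^n(H, -)^{\vee},
\]
where $(-)^{\vee}$ denotes Pontryagin dual. So it suffices to exhibit $I_G$, equipped with its restricted $U$-action, as a module that represents the functor $H^n(U,-)^{\vee}$.

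First I would note that since $U \le G$ is open, the cohomological dimensions agree: $\operatorname{cd}(U)=\operatorname{cd}(G)=n$ (this is the standard fact that $\operatorname{cd}$ is preserved under passing to open subgroups in the profinite setting). Next, for any finite discrete $U$-module $A$, Shapiro's lemma provides a natural isomorphism
\[
H^n(U,A) \;\cong\; H^n\!\bigl(G,\operatorname{Ind}_U^G A\bigr),
\]
where $\operatorname{Ind}_U^G A$ is again a finite discrete $G$-module because $[G:U]<\infty$. Applying the dualizing property of $I_G$ to the $G$-module $\operatorname{Ind}_U^G A$ and taking Pontryagin duals yields
\[
H^n(U,A)^{\vee} \;\cong\; H^n(G,\operatorname{Ind}_U^G A)^{\vee} \;\cong\; \operatorname{Hom}_G\!\bigl(\operatorname{Ind}_U^G A,\, I_G\bigr).
\]
Finally, Frobenius reciprocity (equivalently the adjunction between induction and restriction for profinite groups) gives
\[
\operatorname{Hom}_G\!\bigl(\operatorname{Ind}_U^G A,\, I_G\bigr) \;\cong\; \operatorname{Hom}_U\!\bigl(A,\, I_G|_U\bigr).
\]
Composing these natural isomorphisms shows that the restricted module $I_G|_U$ represents the functor $A\mapsto H^n(U,A)^{\vee}$ on finite discrete $U$-modules. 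By uniqueness of the dualizing module, $I_U \cong I_G$ with the induced $U$-action.

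The only real subtlety lies in making sure all the identifications are natural and compatible so that uniqueness of the dualizing module really applies, in particular that the isomorphism $\operatorname{Hom}_U(-, I_G|_U) \xrightarrow{\sim} H^n(U,-)^{\vee}$ assembled from Shapiro's lemma, the duality for $G$, and Frobenius reciprocity is induced by a cup product pairing into $H^n(U, I_G|_U)$ (i.e.\ comes from an actual trace/evaluation class). This is essentially automatic once one uses the natural versions of Shapiro's lemma and Frobenius reciprocity, but it is the point where care is needed; no further computation beyond tracking these natural transformations is required.
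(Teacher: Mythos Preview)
Your argument is correct and is essentially the standard proof of this fact: Shapiro's lemma plus the $\mathrm{Ind}$--$\mathrm{Res}$ adjunction transport the representing property of $I_G$ to $I_G|_U$, and openness of $U$ is used exactly where you say (to guarantee $\operatorname{cd}(U)=\operatorname{cd}(G)$ and that $\operatorname{Ind}_U^G A$ is again finite, so that the dualizing property of $I_G$ applies).

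There is nothing to compare against in the paper itself: the lemma is stated with a citation to Neukirch--Schmidt--Wingberg and no proof is given. What you have written is precisely the argument sketched in that reference (the remark following their Theorem~3.4.1), so your proposal is in line with the source the paper defers to. The caveat you flag about naturality and compatibility with the trace pairing is real but routine; one only has to note that the Shapiro isomorphism is induced by $\mathrm{cor}$ after composing with the unit of the adjunction, which is exactly what makes the assembled isomorphism come from the cup-product pairing into $H^n(U,I_G|_U)$.
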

\begin{lem}\label{all with q=0}
	Let $p$ be a fixed prime. Let $\{G_n\}_{n\in \mathbb{N}}$ be a set of pro-$p$ Demushkin groups of rank $\aleph_0$, satisfying $s(G_n)=q(G_n)=0$ for all $n$. Then there is a pro-$p$ Demushkin group $G$ of rank $\aleph_0$ and $q(G)=s(G)=0$ equipped with a converging to 1 set of monomorphisms $\{\varphi_n:G_n\to G\}_{n\in \mathbb{N}}$.
\end{lem}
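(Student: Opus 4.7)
The plan is to reduce the problem, via Labute's classification of Demushkin groups of rank $\aleph_0$, to embedding a single group into itself as a converging sequence of open subgroups. First, I would invoke the classification \cite{labute1966demuvskin}: Demushkin groups of rank $\aleph_0$ are determined by the triple $(q,s,\Img(\chi))$. With $q(G_n)=s(G_n)=0$, Remark \ref{interpetation for img chi} gives $\Img(\chi_{G_n})\subseteq 1+q(G_n)\Z_p=\{1\}$, so $\chi_{G_n}$ is trivial and all $G_n$ are mutually isomorphic to a single Demushkin group $G$ of rank $\aleph_0$ with $q(G)=s(G)=0$ and trivial $\chi$. Hence it suffices to construct a converging to 1 family of self-monomorphisms $\varphi_n\colon G\to G$.

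Because $G$ has countable rank, it is metrizable, so one can choose a decreasing basis $\{N_n\}_{n\in\mathbb{N}}$ of open normal subgroup neighborhoods of $e$ with $\bigcap_n N_n=\{e\}$. The structural claim I need is that each $N_n$ is again a Demushkin group of rank $\aleph_0$ with $q(N_n)=s(N_n)=0$. For the Demushkin property and for the rank being $\aleph_0$ I would appeal to Labute's work extending the classical Serre-Labute open-subgroup result to the countable rank setting. For the two invariants: Lemma \ref{open subgroup} gives that the dualizing module of $N_n$ coincides with that of $G$, namely $\Q_p/\Z_p$, with the restricted action; triviality of $\chi_G$ then forces $s(N_n)=0$ and $\chi_{N_n}$ trivial. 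Applying Remark \ref{interpetation for img chi} to $N_n$ (note $q(N_n)\ne 2$, since a $q=2$ group has nontrivial character), we get $1+q(N_n)\Z_p=\{1\}$ in $\Z_p$, so $q(N_n)=0$.

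With $N_n\cong G\cong G_n$ for every $n$ by the classification, I define $\varphi_n\colon G_n\to G$ as the composition of an isomorphism $G_n\xrightarrow{\sim} N_n$ with the inclusion $N_n\hookrightarrow G$. Each $\varphi_n$ is injective with image $N_n$. Convergence to $1$ follows from a compactness argument: for any open $U\leq G$, the decreasing sequence of closed sets $N_n\setminus U$ in the compact group $G$ has intersection contained in $\{e\}\setminus U=\emptyset$, hence is eventually empty, giving $\varphi_m(G_m)=N_m\subseteq U$ for all sufficiently large $m$.

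I expect the main obstacle to be justifying that open subgroups of a Demushkin group of rank $\aleph_0$ are themselves Demushkin of rank $\aleph_0$. This is a non-trivial generalization of the Serre-Labute theorem from the finitely generated case and, if not stated explicitly in \cite{labute1966demuvskin}, can be recovered by a Shapiro-lemma computation of $H^2(N_n,\F_p)$ combined with the cup-product non-degeneracy inherited through the dualizing module.
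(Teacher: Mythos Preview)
Your proof is correct and follows essentially the same approach as the paper: reduce via Labute's uniqueness result to a single $G$, take a descending cofinal chain of open subgroups (the paper enumerates all open subgroups $U_m$ and sets $V_n=\bigcap_{m\le n}U_m$, you pick a neighborhood basis $\{N_n\}$ directly), verify each term is again Demushkin of rank $\aleph_0$ with $q=s=0$ using Lemma~\ref{open subgroup} and the triviality of $\chi$, and compose isomorphisms with inclusions. The open-subgroup step you flag as a potential obstacle is exactly what the paper cites as \cite[Theorem 2]{labute1966demuvskin}.
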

\begin{proof}
	Recall that by \cite[Corollary 1]{labute1966demuvskin}, all pro-$p$ Demushkin groups of rank $\aleph_0$ with $q(G)=s(G)=0$ are isomorphic. Let $G$ be a pro-$p$ Demushkin group of rank $\aleph_0$ with $q(G)=s(G)=0$. Since $G$ has countable rank, by \cite[Propositions 2.6.1+2.6.2]{ribes2000profinite}, $G$ admits $\aleph_0$ open subgroups. Let $\{U_n\}_{n\in \mathbb{N}}$ be an indexing of the set of all open subgroups of $G$. Define $V_n=\bigcap_{m=1}^n U_m$. By \cite[Theorem 2]{labute1966demuvskin} $V_n$ is a pro-$p$ Demushkin group too. Moreover, $U$ has rank $\aleph_0$ as an open subgroup of a group of rank $\aleph_0$. In addition, by Lemma \ref{open subgroup}, $s(V_n)=0$. Eventually, since $\Img(\chi)=1+0\Z_p=\{e\}$, $\Img(\chi_{V_n})=\{e\}$ which implies $q(V_n)=0$. Now let $\varphi_n:G_n\to G$ be the composition of an isomorphism $G_n\to V_n$ with the inclusion map. Then  $\{\varphi_n:G_n\to G\}_{n\in \mathbb{N}}$ is a converging to 1 set of monomorphisms.
\end{proof}
\begin{lem}\label{all with $q$ natural goes to infinity}
	Let $p$ be a fixed prime. Let $\{G_n\}_{n\in \mathbb{N}}$ be a set of pro-$p$ Demushkin groups of rank $\aleph_0$, satisfying $s(G_n)=0$ for all $n$ and $q(G_n)=q_n$ where $q_n>2$ is a series of natural $p$-powers converging to infinity. Then there is a pro-$p$ Demushkin group $G$ of rank $\aleph_0$ and $s(G)=0, q(G)\ne 2$ equipped with a converging to 1 set of monomorphisms $\{\varphi_n:G_n\to G\}_{n\in \mathbb{N}}$.
\end{lem}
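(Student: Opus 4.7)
The plan is to mimic the strategy of Lemma \ref{all with q=0}: fix a single Demushkin group $G$ with the required invariants and realize each $G_n$ as a carefully chosen open subgroup of $G$, appealing to Labute's classification to upgrade a match of invariants to an actual isomorphism. The new complication compared to the $q=0$ case is that a generic open subgroup of $G$ will not have $q$-value equal to the prescribed $q_n$, and the obvious candidate does not shrink to $\{e\}$, so the construction must simultaneously control $q(V_n)$ and ensure that $\{V_n\}$ converges to $1$.

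First I would set $q_0=\min_n q_n$, which exists and satisfies $q_0>2$ since the $q_n$'s are natural $p$-powers greater than $2$ tending to infinity, and by Labute's classification pick a pro-$p$ Demushkin group $G$ of rank $\aleph_0$ with $s(G)=0$ and $q(G)=q_0$; by Remark \ref{interpetation for img chi} its character satisfies $\Img(\chi)=1+q_0\Z_p$. Then I would enumerate the countably many open subgroups of $G$ as $U_1=G,U_2,\dots$, set $W_k=\bigcap_{m\le k}U_m$, and observe that each $W_k$ is a Demushkin group of rank $\aleph_0$ with $s(W_k)=0$ (Lemma \ref{open subgroup}) whose character is the restriction $\chi|_{W_k}$. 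Its image is a closed finite-index subgroup of $\Img(\chi)=1+q_0\Z_p\cong\Z_p$, hence of the form $1+p^{l_k}\Z_p$, giving $q(W_k)=p^{l_k}$ with $l_k\ge\log_p q_0$ non-decreasing in $k$.

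The key technical point is that $l_k\to\infty$. Since $\{W_k\}$ is a decreasing family of open subgroups of a profinite group with $\bigcap_k W_k=\{e\}$, for every open neighborhood $V$ of $1$ in $\Img(\chi)$ the preimage $\chi^{-1}(V)$ is an open neighborhood of $e$ in $G$ and hence contains $W_k$ eventually, so $\chi(W_k)\subseteq V$ eventually; because the open neighborhoods of $1$ in $\Img(\chi)\cong\Z_p$ are exactly the subgroups $1+p^m\Z_p$, this forces $l_k\to\infty$. Given this, I would define $h(n)=\max\{k:l_k\le\log_p q_n\}$; it is nonempty since $l_1=\log_p q_0\le\log_p q_n$, finite since $l_k\to\infty$, and $h(n)\to\infty$ because for any fixed $K$ one has $l_K\le\log_p q_n$ for almost all $n$.

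Finally, inside $W_{h(n)}$ I would take $V_n:=(\chi|_{W_{h(n)}})^{-1}(1+q_n\Z_p)$. Since $\log_p q_n\ge l_{h(n)}$ this is an open subgroup of $W_{h(n)}$, hence of $G$, and by construction its character has image exactly $1+q_n\Z_p$, so $V_n$ is a Demushkin group of rank $\aleph_0$ with $s(V_n)=0$ and $q(V_n)=q_n$. Labute's classification for $q\ne 2$ then yields an isomorphism $V_n\cong G_n$, and composition with inclusion gives the desired monomorphism $\varphi_n:G_n\to G$. The containment $V_n\subseteq W_{h(n)}$ together with $h(n)\to\infty$ implies that for every open $U_K\le G$ we have $V_n\subseteq U_K$ for almost all $n$, which is the converging-to-$1$ condition. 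I expect the main obstacle to be precisely this convergence: the naive pullback $\chi^{-1}(1+q_n\Z_p)$ in $G$ itself always contains the nontrivial closed subgroup $\ker\chi$ and so cannot shrink to the identity; routing through $W_{h(n)}$ and reindexing via $h$ is the device that reconciles the prescribed $q$-values with convergence to $1$.
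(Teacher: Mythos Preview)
Your proposal is correct and follows essentially the same approach as the paper's own proof: the paper also fixes $G$ with $q(G)=\min_i q(G_i)$, forms the decreasing sequence $V_m=\bigcap_{k\le m}U_k$ of open subgroups, shows $q(V_m)\to\infty$, defines $m_n$ to be the largest index with $q(V_{m_n})\le q(G_n)$, and then maps $G_n$ isomorphically onto $V_{m_n}\cap\chi^{-1}(1+q(G_n)\Z_p)$ via Labute's uniqueness. Your argument is slightly more explicit in justifying $h(n)\to\infty$ and in flagging why the naive pullback $\chi^{-1}(1+q_n\Z_p)$ alone fails to converge to $1$, but the construction and the key steps are identical.
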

\begin{proof}
	Let $G$ be a pro-$p$ Demushkin group with $s(G)=0$ and $q(G)=q$ for $q=\min \{q(G_i)\}_{i\in I}$. As we stated in Lemma \ref{all with q=0}, every open subgroup $U$ of $G$ is a Demushkin group of $s(U)=0$. As in the proof of Lemma \ref{all with q=0}, we index the open subgroups $U$ of $G$ by $\mathbb{N}$ and let $V_n=\bigcap_{m=1}^nU_m$. In particular, every $V_n$ is a Demushkin group with $s(V_n)=0$. Recall that $q(V_n)=q$ for $\chi(V_n)=1+q\Z_p$. Since $V_n$ is of finite index in $G$, $\chi$ has finite index in $\chi(G)$. Hence, for every $n$, $q(V_n)$ is finite. Moreover,  $q(V_n)\to \infty$. Indeed, for every $q'>q$, let $U_m=\chi^{-1}(1+q'\Z_p)$, then $q(V_m)\geq q(U_m)=q'$. Recall again that for every $q\ne 2$ a power of $p$ there is a unique pro-$p$ Demishkin group $H$ with $s(H)=0$ and $q(H)=q$ up to isomorphism. Now define the following monomorphisms $\varphi_n:G_n\to G$ as follows: for every $n$, let $m_n$ be the greatest integer such that $q(V_{m_n})\leq q(G_n)$ and send $V_n$ isomorphically onto some open subgroup $U'$ of $V_{m_n}$ with $q(U')=q(G_n)$; such an open subgroup can be constructed, for example, by taking $V_{m_n}\cap\chi^{-1}(1+q(G_n)\Z_p)$. Since $q(V_n)\to \infty$,  $\{\varphi_n:G_n\to G\}_{n\in \mathbb{N}}$ is a converging to 1 set of monomorphisms.
\end{proof}
\begin{lem}\label{splitting the free product}
	Let $I$ be an indexing set and assume that $I=I_1\cup\cdots \cup I_n$. Let $\{G_i\}_{i\in I}$ be a set of pro-$\mathcal{C}$ groups. Then $\coprod_I^{\mathcal{C}}G_i\cong \coprod\limits_{i=1}^{n}{}{\!\!^\mathcal{C}}(\coprod_{i\in I_n}^{\mathcal{C}}G_i)
	$
\end{lem}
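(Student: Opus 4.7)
The plan is to prove the lemma purely formally from the universal property in Definition \ref{free pro-C product}, so the whole argument is an exercise in ``associativity of colimits''. I will assume (as the statement tacitly requires, for the right-hand side to make sense) that the decomposition $I=I_1\cup\cdots\cup I_n$ is a partition; if it is not, one first refines to a partition and notes that forgetting the repetitions does not change the right-hand side up to canonical isomorphism.

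Set $H_k=\coprod_{i\in I_k}^{\mathcal{C}}G_i$ with natural maps $\psi_{i,k}\colon G_i\to H_k$, set $H=\coprod_{k=1}^{n}{}^{\mathcal{C}}H_k$ with natural maps $\mu_k\colon H_k\to H$, and set $G=\coprod_{I}^{\mathcal{C}}G_i$ with natural maps $\iota_i\colon G_i\to G$. I will construct mutually inverse continuous homomorphisms $\alpha\colon G\to H$ and $\beta\colon H\to G$.

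To build $\beta$, fix $k$. The subfamily $\{\iota_i\}_{i\in I_k}$ of the converging-to-$1$ family $\{\iota_i\}_{i\in I}$ is itself converging to $1$ in $G$, so the universal property of $H_k$ yields a unique continuous homomorphism $\beta_k\colon H_k\to G$ with $\beta_k\circ\psi_{i,k}=\iota_i$ for all $i\in I_k$. The finite family $\{\beta_k\}_{k=1}^{n}$ is trivially converging to $1$, so the universal property of $H$ delivers a unique continuous homomorphism $\beta\colon H\to G$ with $\beta\circ\mu_k=\beta_k$ for each $k$. Symmetrically, to build $\alpha$, for each $i\in I$ let $k(i)$ be the unique index with $i\in I_{k(i)}$ and set $\tau_i=\mu_{k(i)}\circ\psi_{i,k(i)}\colon G_i\to H$. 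The key point to check is that the family $\{\tau_i\}_{i\in I}$ converges to $1$ in $H$: given $U\leq_o H$, its preimage $\mu_k^{-1}(U)$ is open in $H_k$ for each $k$, hence by convergence-to-$1$ in $H_k$ all but finitely many $i\in I_k$ satisfy $\psi_{i,k}(G_i)\subseteq\mu_k^{-1}(U)$, i.e.\ $\tau_i(G_i)\subseteq U$; since there are only $n$ values of $k$, this leaves only finitely many exceptional $i\in I$. The universal property of $G$ then produces $\alpha\colon G\to H$ with $\alpha\circ\iota_i=\tau_i$.

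Finally, one checks $\beta\circ\alpha=\mathrm{id}_G$ and $\alpha\circ\beta=\mathrm{id}_H$. Both composites are continuous homomorphisms that agree on the generating subgroups ($\beta\circ\alpha\circ\iota_i=\beta\circ\tau_i=\beta\circ\mu_{k(i)}\circ\psi_{i,k(i)}=\beta_{k(i)}\circ\psi_{i,k(i)}=\iota_i$, and analogously $\alpha\circ\beta\circ\mu_k\circ\psi_{i,k}=\mu_k\circ\psi_{i,k}$, which by uniqueness in the universal property of $H_k$ forces $\alpha\circ\beta\circ\mu_k=\mu_k$), and the uniqueness clause in Definition \ref{free pro-C product} (recorded in the remark after it) then forces each composite to equal the identity.

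The only non-formal step is the verification that $\{\tau_i\}_{i\in I}$ converges to $1$; this is where the finiteness of $n$ is essential and is the sole content beyond abstract nonsense.
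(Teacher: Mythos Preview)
Your proof is correct and is essentially the same approach as the paper's: both arguments are purely formal manipulations of the universal property of Definition~\ref{free pro-C product}, and the only substantive verification is that the composite structure maps $\tau_i=\mu_{k(i)}\circ\psi_{i,k(i)}$ converge to $1$, which uses the finiteness of $n$ exactly as you say. The paper's write-up is terser---it directly verifies that the right-hand side satisfies the universal property of $\coprod_I^{\mathcal{C}}G_i$ rather than constructing mutually inverse maps---and in fact omits the convergence-to-$1$ check that you spell out; your version is more complete on this point.
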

\begin{proof}
	We shall show that $\coprod\limits_{i=1}^{n}{}{\!\!^\mathcal{C}}(\coprod_{i\in I_n}^{\mathcal{C}}G_i)$ satisfies the universal property of free pro-$\mathcal{C}$ product. Let $\{\varphi_i:G_i\to H\}_{i\in I}$ be a converging to 1 set of homomorphisms into a group in $\mathcal{C}$. Then for every $1\leq m\leq n$, $ \{\varphi_i:G_i\to H\}_{i\in I_m}$ is a converging to 1 set of homomorphisms. Denote by $\gamma_i:G_i\to \coprod_{i\in I_m}^{\mathcal{C}}G_i$ the natural homomorphism for $i\in I_m$. Then for every $m$ there exists a homomorphism $\psi:\coprod_{i\in I_m^{\mathcal{C}}}G_i\to H$ such that $\psi_m\circ \gamma_i=\varphi_i$ for every $i\in I_m$. Now let $\delta_m:\coprod_{i\in I_m}^{\mathcal{C}}G_i\to  \coprod\limits_{i=1}^{n}{}{\!\!^\mathcal{C}}(\coprod_{i\in I_n}^{\mathcal{C}}G_i)$ be the natural homomorphism. Again, by definition of free pro-$\mathcal{C}$ product there exists $\eta: \coprod\limits_{i=1}^{n}{}{\!\!^\mathcal{C}}(\coprod_{i\in I_n^{\mathcal{C}}}G_i)\to H$ such that $\eta\circ \delta_m=\psi_m$. Taking $f_i:G_i\to  \coprod\limits_{i=1}^{n}{}{\!\!^\mathcal{C}}(\coprod_{i\in I_n}^{\mathcal{C}}G_i)$ to be $\delta_m\circ \gamma_i$ whenever $i\in I_m$, we get the required. 
\end{proof}
Now we are ready to prove the main theorem of the paper.
\begin{thm}\label{free product of Demushkin}
	Let $p$ be a fixed prime $l$ a prime different than $p$, $I$ be an infinite countable set and $\{G_i\}_{i\in I}$ pro-$p$ Demushkin groups of rank $\aleph_0$ which can be realized as absolute Galois groups. Then the following are equivalent:
	\begin{enumerate}
		\item $\coprod_{i\in I}G_i$ can be realized as an absolute Galois group of a field $F$ (of characteristic $l$).
		\item $\coprod_{i\in I}^pG_i$ can be realized as an absolute Galois group of a field $F$ (of characteristic $l$).
		\item $\coprod_{i\in I}G_i$ can be realized as an absolute Galois group of a field $F$ (of characteristic $l$) which contains a primitive $p$'th root of unity.
		\item $\coprod_{i\in I}^pG_i$ can be realized as a maximal pro-$p$ Galois group of a field $F$ (of characteristic $l$) which contains a primitive $p$'th root of unity.
		\item $\log _pq(G_i)\to \infty$ (and $\log_pq(G_i)\geq f(l,p)$ for all $i$ such that $q(G_i)\ne 2$. In addition if $q(G_i)=2$ then $G_i$ can be realized over a field of characteristic $l$).
	\end{enumerate} 
\end{thm}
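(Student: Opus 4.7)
The plan is to prove the cycle $(3) \Rightarrow (1) \Rightarrow (2) \Rightarrow (4) \Rightarrow (5) \Rightarrow (3)$, which establishes the equivalence of all five conditions. The first four arrows are essentially formal, while the last is the main constructive content.

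For the soft implications: $(3) \Rightarrow (1)$ is immediate by dropping the $\mu_p$-condition. For $(1) \Rightarrow (2)$, Lemma~\ref{the pro-p case} embeds $\coprod_I^p G_i$ as a closed subgroup of $\coprod_I G_i = G_F$, and any closed subgroup of an absolute Galois group is itself the absolute Galois group of the corresponding fixed subfield (of the same characteristic $l$). For $(2) \Rightarrow (4)$, if $\coprod_I^p G_i = G_K$ with $K$ of characteristic $l$, then since $G_K$ is a nontrivial pro-$p$ group, item~(1) of the remark after Proposition~\ref{characterisation of Demushkin groups as absolute Galois groups} forces $\mu_p \in K$, and pro-$p$-ness of $G_K$ also forces $K^{\operatorname{sep}} = K(p)$, so $\coprod_I^p G_i = G_K = G_K(p)$ is the maximal pro-$p$ Galois group of $K$. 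For $(4) \Rightarrow (5)$, Lemma~\ref{main example} delivers $\log_p q(G_i) \to \infty$; the auxiliary bound $\log_p q(G_i) \geq f(l,p)$ for $q(G_i) \neq 2$, and the realizability over characteristic $l$ when $q(G_i) = 2$, are obtained by noting that each $G_i$ embeds in $\coprod_I^p G_i = G_F(p)$ (Proposition 5.1.6 of \cite{ribes2017profinite}) and is therefore the maximal pro-$p$ Galois group of its own fixed field (which has characteristic $l$ and contains $\mu_p$), then applying Proposition~\ref{characterisation of Demushkin groups as absolute Galois groups} together with item~(3) of the remark for the $q=2$ case.

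The heart of the argument is $(5) \Rightarrow (3)$. Partition $I = I_0 \sqcup I_{\mathrm{nat}} \sqcup I_2$ with $I_0 = \{i : q(G_i) = 0\}$, $I_2 = \{i : q(G_i) = 2\}$, and $I_{\mathrm{nat}}$ the remainder. The divergence $\log_p q(G_i) \to \infty$ forces $I_2$ to be finite, and finitely many leftover elements (when either $I_0$ or $I_{\mathrm{nat}}$ is finite) can be absorbed into $I_2$. On each infinite piece I apply Lemma~\ref{all with q=0} (for $I_0$) or Lemma~\ref{all with $q$ natural goes to infinity} (for $I_{\mathrm{nat}}$, choosing the target Demushkin group to have $q(H) = \min_{i \in I_{\mathrm{nat}}} q(G_i) \geq p^{f(l,p)}$ by~(5)), producing a converging-to-$1$ set of monomorphisms into a single pro-$p$ Demushkin group $H$ of rank $\aleph_0$ with $s(H) = 0$ and appropriate $q(H) \neq 2$. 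Proposition~\ref{characterisation of Demushkin groups as absolute Galois groups} realizes $H$ as an absolute Galois group over a field of characteristic $l$, which automatically contains $\mu_p$ since $H$ is pro-$p$. The strengthened Mel'nikov criterion (Lemma~\ref{common root infinite case}) then realizes $\coprod_{I_0} G_i$ and $\coprod_{I_{\mathrm{nat}}} G_i$ as absolute Galois groups over fields of characteristic $l$ containing $\mu_p$. The finite piece $\coprod_{I_2} G_i$ is realized over such a field by iterated applications of Corollary~\ref{common root of unity}, using the hypothesis that each $G_i$ with $q(G_i) = 2$ is realizable over characteristic $l$. Finally, Lemma~\ref{splitting the free product} expresses $\coprod_I G_i$ as the free profinite product of these (at most three) pieces, and one more round of Corollary~\ref{common root of unity} yields the desired realization over a field of characteristic $l$ containing $\mu_p$, which is~(3).

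The main obstacle lies not in any single step but in careful bookkeeping around the partition of $I$: handling the degenerate subcases (empty $I_0$ or $I_{\mathrm{nat}}$, the $p=2$ subtleties for $q(G_i) = 2$), and matching the hypothesis $\log_p q(G_i) \geq f(l,p)$ to the invariant of the auxiliary Demushkin group produced by Lemma~\ref{all with $q$ natural goes to infinity}. A subtler point is the insistence on preserving $\mu_p$ through every combination step — it is precisely here that the refined Mel'nikov criterion (Lemma~\ref{common root infinite case}) and the $\mu_p$-refined closure under finite free products (Corollary~\ref{common root of unity}) are indispensable, rather than their classical counterparts.
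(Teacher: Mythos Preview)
Your proof is correct and follows essentially the same strategy as the paper: the same partition of $I$ by the value of $q(G_i)$, the same use of Lemmas~\ref{all with q=0} and~\ref{all with $q$ natural goes to infinity} to feed Mel'nikov's criterion, and the same assembly via Lemma~\ref{splitting the free product} and finite free-product closure. The only difference is the ordering of the cycle: the paper runs $(1)\Rightarrow(2)\Rightarrow(3)\Rightarrow(4)\Rightarrow(5)\Rightarrow(1)$, invoking Lemma~\ref{common root infinite case} at the step $(2)\Rightarrow(3)$ and proving only $(5)\Rightarrow(1)$ without tracking $\mu_p$, whereas you take the shortcut $(2)\Rightarrow(4)$ and instead carry $\mu_p$ through the constructive step $(5)\Rightarrow(3)$---a cosmetic rearrangement that neither saves nor costs anything substantive.
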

\begin{proof}
	By Lemma \ref{the pro-p case} we get that  $(1)\Rightarrow(2)$. Assume $(2)$, then $\coprod_{i\in I}^pG_i$ is isomorphic to the absolute Galois group of a field $F$. Since $\coprod_{i\in I}^pG_i$ is a pro-$p$ group, $F$ must contain a primitive $p$-th root of unity. Since there is a converging to 1 set of monomorphisms $\{G_i\to \coprod_{i\in I}^pG_i\}$ then by Lemma \ref{common root infinite case} $\coprod_{i\in I}G_i$ can be realized as an absolute Galois group over e field containing a primitive $p$-th root of unity. We get that $(2)\Rightarrow (3)$. $(3)\Rightarrow(4)$ is immediate since the maximal pro-$p$ Galois group of a field is the maximal pro-$p$ quotient of its absolute Galois group, and the maximal pro-$p$ quotient of $\coprod_{i\in I}G_i$ is $\coprod_{i\in I}^pG_i$ (see \cite[Theroem 5.6.1]{ribes2017profinite}). $(4)\Rightarrow (5)$ is precisely Lemma \ref{main example}, (together with the fact that if $\coprod_I^pG_i$ can be realized as a maximal pro-$p$ Galois group over $F$, then $G_i$, being a closed subgroup of $\coprod_{I}^pG_i$ can be realized as a maximal pro-$p$ Galois group of some extension of $F$).  We left to prove $(5)\Rightarrow (1)$. Let $n$ be the number of Demushkin groups $G_i,i\in I$ for which $q(G_i)=2$. For more convenience denote these groups by $G_1,...,G_n$. Observe that since $q(G_i)\to \infty$ then $n$ must be finite. In addition let $I_1$ be the subset of all $G_i,i\in I$ with $q(G_i)=0$ and $I_2$ be the subset of all $G_i,i\in I$ with $q(G_i)\ne 0,2$.  Lemmas \ref{all with q=0} and \ref{all with $q$ natural goes to infinity} give converging to 1 sets of monomorphisms $\{G_i\to H_1\}_{i\in I_1}$ and $  \{G_i\to H_2\}_{i\in I_2}$ where $H_1,H_2$ are pro-$p$ Demushkin groups with $s(H_i)=0$ and $q(H_i)\ne 2$ (in addition, $\log_pq(H_i)\geq f(l,p)$). By Proposition \ref{characterisation of Demushkin groups as absolute Galois groups}, $H_1, H_2$ can be realized as absolute Galois groups (over fields of characteristic $l$). Hence, Mel'nikov's Criterion implies that  $\coprod_{I_1}G_i$, $\coprod_{I_2}G_i$ can be realized as absolute Galois groups (over fields of characteristic $l$). Now applying the closeness of the class of absolute Galois groups under free profinite product of finitely many groups, we conclude that $G_1\coprod\cdots \coprod G_n\coprod\left(\coprod_{I_1}G_i\right)\coprod\left(\coprod_{I_2}G_i\right)$ can be realized as an absolute Galois group (over a field of characteristic $l$). By Lemma \ref{splitting the free product} we are done.
\end{proof}
\begin{rem}
	The case of characteristic $p$ can be ignored as the maximal pro-$p$ Galois group of a field of characteristic $p$ must be free, as stated above.
\end{rem}
\begin{rem}
	We can now simplify Question \ref{further questions} (2) and focus only on the case that $q(G_i)=0$ for all $G_i\in I$. Indeed, let $I$ be an uncountable set of pro-$p$ Demushkin groups satisfying the conditions of Question \ref{further questions} (2). Define $I_1=\{G_i:q(G_i)\ne 0\}$ and $I_2=\{G_i:q(G_i)= 0\}$. By assumption $I_1$ is countable, and satisfying that $\log_pq(G_i)\to\infty$. Thus by Theorem \ref{free product of Demushkin} $\coprod_{I_1}G_i$ ($\coprod_{I_1}G_i$) is an absolute Galois group. By Lemma \ref{splitting the free product} $\coprod_IG_i\cong (\coprod_{I_1}G_i)\coprod(\coprod_{I_2}G_i)$ ($\coprod_I^pG_i\cong (\coprod_{I_1}^pG_i)\coprod^p(\coprod_{I_2}^pG_i)$). Hence $\coprod_IG_i$ ($\coprod_I^pG_i$) is an absolute Galois group if and only if so is $\coprod_{I_2}G_i$ ($\coprod_{I_2}^pG_i$).
\end{rem}

	\subsection*{Acknowledgments}
	The author wish to thank Prof. Dan Haran and Moshe Jarden for a useful discussion on infinite free products which cannot be realized as absolute Galois groups.

	%	\bibliographystyle{plain}
	%\bibliography{references}
	%\bibliography{references,../../../Results_for_PHD/references}	
\end{document}